\let\mathcal\mathscr
\numberwithin{equation}{section}
\newtheorem{theorem}{Theorem}[section]
\newtheorem{proposition}[theorem]{Proposition}
\theoremstyle{definition}
\newtheorem*{ack}{Acknowledgements}
\newtheorem{definition}[theorem]{Definition}
\renewcommand{\phi}{\varphi}
\renewcommand{\rho}{\varrho}
\newcommand{\PP}{\mathbb{P}}
\renewcommand{\AA}{\mathbb{A}}
\newcommand{\A}{\mathbf{A}}
\newcommand{\FF}{\mathbb{F}}
\newcommand{\ZZ}{\mathbb{Z}}
\newcommand{\NN}{\mathbb{Z}_{>0}}
\newcommand{\QQ}{\mathbb{Q}}
\newcommand{\RR}{\mathbb{R}}
\newcommand{\CC}{\mathbb{C}}
\newcommand{\val}{{\rm val}}
\renewcommand{\leq}{\leqslant}
\renewcommand{\geq}{\geqslant}
\renewcommand{\bar}{\overline}
\newcommand{\x}{\mathbf{x}}
\newcommand{\y}{\mathbf{y}}
\renewcommand{\c}{\mathbf{c}}
\renewcommand{\u}{\mathbf{u}}
\renewcommand{\b}{\mathbf{b}}
\renewcommand{\a}{\mathbf{a}}
\newcommand{\la}{\lambda}
\newcommand{\ve}{\varepsilon}
\newcommand{\ee}{\varepsilon}
\newcommand{\bla}{{\boldsymbol{\lambda}}}
\DeclareMathOperator{\Pic}{Pic}
\DeclareMathOperator{\meas}{meas}
\DeclareMathOperator{\sign}{sign}
\DeclareMathOperator{\Mod}{mod} 
\renewcommand{\bmod}[1]{\,(\Mod{#1})}
\newcommand{\Br}{{\rm Br}}
\newcommand{\W}{\mathcal{W}}
\newcommand{\V}{\mathcal{V}}
\newcommand{\U}{\mathcal{U}}
\renewcommand{\=}{\equiv}
\renewcommand{\t}{\mathbf{t}}
\begin{document}
\title[Rational points on pencils of conics and quadrics]{Rational points on pencils of conics and quadrics\\ with many degenerate fibres}

\author{T.D. Browning}
\address{School of Mathematics\\
University of Bristol\\ Bristol\\ BS8 1TW\\U.K.}
\email{t.d.browning@bristol.ac.uk}

\author{L. Matthiesen}
\address{D\'epartement de Math\'ematiques\\
Universit\'e Paris-Sud\\ 91405 Orsay\\France}
\email{lilian.matthiesen@math.u-psud.fr}

\author{A.N. Skorobogatov}
\address{
Department of Mathematics\\ Imperial College London \\ London\\ SW7 2BZ\\U.K.}
\email{a.skorobogatov@imperial.ac.uk}

\thanks{2010  {\em Mathematics Subject Classification.} 14G05 (11B30, 11G35, 14D10)}

\begin{abstract}
For any pencil of conics or higher-dimensional quadrics over $\QQ$, 
with all degenerate fibres defined over $\QQ$, we
show that the Brauer--Manin obstruction controls 
weak approximation. 
The proof is based on the Hasse principle and weak approximation 
for some special intersections of quadrics over $\QQ$, which is
a consequence of recent advances in additive combinatorics.
\end{abstract}

\maketitle

\section{Introduction}
\label{s:intro}

The arithmetic of conic bundle surfaces $X$ over number fields $k$
has long been the object of intensive study. 
Such varieties are defined to be projective non-singular surfaces $X$
over $k$, which are equipped with a dominant $k$-morphism
$\pi:X\rightarrow \PP_k^1$, all of whose fibres are conics.
Colliot-Th\'el\`ene and Sansuc \cite{ct-s-79} conjectured in 1979 
that the Brauer--Manin obstruction is the only 
obstruction to the Hasse principle and weak approximation for 
conic bundle surfaces. They also showed in \cite{ct-s-82} how one may 
use Schinzel's hypothesis (also known as the hypothesis of 
Bouniakowsky, Dickson and Schinzel) to study the Hasse principle, 
weak approximation and the Brauer--Manin obstruction on conic bundles 
over $\QQ$ given by an equation $x^2-ay^2=P(t)$, 
with $a \in \QQ^*$ and $P(t)$ a polynomial of arbitrary degree.
Later the same method was taken up again and generalised by 
Serre \cite[Ch.~II, Annexe]{serre} and by Swinnerton-Dyer \cite{swd-94},
general results for pencils of Severi--Brauer varieties or 2-dimensional quadrics
being given by Colliot-Th\'el\`ene and Swinnerton-Dyer in 
\cite{ct-swd-94}.

To discuss unconditional resolutions of the conjecture it is convenient
to assume without loss of generality
that the conic bundle $\pi:X\to\PP^1_k$ is relatively minimal,
which means that no irreducible component of a degenerate 
fibre is defined over the field of definition of that fibre.
The work to date has been restricted to the case in which the number 
 $r$ of  degenerate  fibres of $\pi$ is small.
When $0\leq r\leq 5$ everything is known about the qualitative 
arithmetic unconditionally.  Thus for $0\leq r\leq 3$ 
the Hasse  principle holds and, furthermore, 
$X$ is $k$-rational as soon as $X(k)\neq \emptyset$.
When $r=4$ one finds that  $X$ is either 
a Ch\^atelet surface or else
a  quartic del Pezzo surface with a conic bundle structure. 
The former is handled by Colliot-Th\'el\`ene, Sansuc and 
Swinnerton-Dyer \cite{CT} and the latter by 
Colliot-Th\'el\`ene \cite{ct-4} and Salberger \cite{s-conic}, using 
independent approaches.
When $r=5$,  $X$ is $k$-isomorphic to a smooth
cubic surface containing a line defined over $k$. 
In particular $X(k)\neq \emptyset$. On contracting the line to obtain 
a del Pezzo surface of degree $4$ with a $k$-point, work of 
Salberger and Skorobogatov \cite{salb-skoro} ensures that 
the Brauer--Manin obstruction is the only obstruction to 
weak approximation in this case.
Finally, when $r=6$, some special cases have been dealt with by 
Swinnerton-Dyer \cite{swd-toulouse} (cf.\ \cite[\S 7.4]{skoro}).

In this article we demonstrate how recent advances in additive 
combinatorics 
can help deal unconditionally with conic bundle surfaces $X$ 
defined over $\QQ$, in which the degenerate fibres are 
all defined over $\QQ$. The point where two components of a degenerate
fibre meet is then a $\QQ$-rational point, so that 
$X(\QQ)\neq \emptyset$ for the surfaces under consideration.
Thus the main arithmetic questions of interest concern whether or not 
$X(\QQ)$ is dense in $X$ under the Zariski topology, or in 
$X(\A)$ under the product topology, 
where $\A$ denotes the set of ad\`eles for $\QQ$.
Our first result resolves these basic questions.

\begin{theorem}\label{t:1}
Let $X/\PP_\QQ^1$ be a conic bundle surface 
over $\QQ$, in which degenerate 
fibres exist and are all defined over $\QQ$. Then 
the set $X(\QQ)$ is Zariski dense in $X$. Furthermore, 
the Brauer--Manin obstruction is the only obstruction to 
weak approximation for $X$.
\end{theorem}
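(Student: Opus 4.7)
The plan is to apply the fibration method. Given $(P_v)_v \in X(\A)^{\Br}$, a finite set $S$ of places, and a target accuracy, I would find a rational $t_0 \in \PP^1(\QQ)$ close to $\pi(P_v)$ in $\PP^1(\QQ_v)$ for each $v \in S$, such that the fibre $X_{t_0}$ is everywhere locally soluble and admits a $\QQ_v$-point near $P_v$ for each $v \in S$. Since smooth fibres are conics, Hasse--Minkowski and weak approximation on conics then deliver the required $\QQ$-point. Zariski density of $X(\QQ)$ follows a posteriori: running the procedure with varying $S$ and $(P_v)$ produces rational $t_0$ in a dense subset of $\PP^1(\QQ)$, each corresponding to a smooth conic fibre with a $\QQ$-point, and the union of the $\QQ$-points in these fibres cannot lie on any proper closed subset of $X$.

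After assuming $\pi$ is relatively minimal and moving the degenerate fibres to distinct points $e_1,\dots,e_r\in\AA^1(\QQ)$, the conic bundle is controlled by its Brauer class in $\Br(\QQ(t))$, whose residue at $e_i$ is a quadratic character of $\QQ$ corresponding to a squarefree integer $a_i$. Local solubility of $X_t$ at $v$ then translates to a system of Hilbert symbol conditions on $t-e_i$ over $\QQ_v$, and the Brauer--Manin condition on $(P_v)$ translates precisely to the compatibility needed to choose constants $c_i \in \QQ^*$ such that each condition ``$c_i(u-e_i v)$ is a local norm from $\QQ_v(\sqrt{a_i})$'' is simultaneously locally soluble with $t_v = u/v$ approximating $\pi(P_v)$. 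This leads to the auxiliary variety $Y\subset\AA^{2r+2}_\QQ$ cut out by the system of $r$ quadrics
\[
  x_i^2 - a_i y_i^2 = c_i(u - e_i v), \qquad i=1,\dots,r.
\]
A rational point of $Y$ whose $(u,v)$-coordinates are close to the prescribed local data furnishes a $t_0 = u/v$ of the desired sort, and a corresponding $\QQ_v$-approximant on the conic $X_{t_0}$.

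The main obstacle is the Hasse principle and weak approximation for $Y$. Once $r \geq 2$ this system is a genuine intersection of quadrics, beyond the reach of classical Hasse--Minkowski; the traditional fibration approach would require Schinzel's hypothesis to arrange the correct factorisation pattern on the linear forms $u - e_i v$. In place of that conditional input, I would appeal to recent results from additive combinatorics (in the spirit of Green--Tao and Green--Tao--Ziegler, with extensions by Matthiesen) that provide asymptotic formulas for the joint distribution of multiplicative characters and norm-form indicator functions along linear sequences $L_i(u,v) = u - e_i v$. Concretely, one seeks an asymptotic for the number of $(u,v)\in\ZZ^2$ in a large box with each $c_i L_i(u,v)$ lying in the image of $\mathrm{N}_{\QQ(\sqrt{a_i})/\QQ}$ up to prescribed local conditions. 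Such an asymptotic produces a nonzero main term exactly when the local obstructions (inherited from the Brauer--Manin condition on $X$) vanish, yielding both the Hasse principle and weak approximation for $Y$. Combined with the fibration argument outlined above, this completes the proof.
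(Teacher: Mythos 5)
Your proposal is correct and follows essentially the same route as the paper: your auxiliary variety $Y$ given by $x_i^2-a_iy_i^2=c_i(u-e_iv)$ is exactly the descent variety $W_{\bla}$ of Colliot-Th\'el\`ene--Sansuc (the constants $c_i$ parametrising the twists of the torsor), and the Hasse principle with weak approximation for it via the Green--Tao--Ziegler/Matthiesen machinery is precisely the paper's key new input, Theorem~\ref{t:ut} in the case $s=2$. The only difference is one of packaging: the steps you assert informally --- that the Brauer--Manin condition on $(P_v)$ permits a coherent choice of the $c_i$, and that a rational point of $Y$ approximating the local data yields a $t_0$ with $X_{t_0}$ everywhere locally soluble --- are handled in the paper by citing the universal-torsor descent theory of \cite{ct-s-87} (the universal torsor being birational to $W_{\bla}\times C\times\AA^1_\QQ$), rather than by a hands-on fibration argument.
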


We assume without loss of generality that $X\to\PP^1_\QQ$
is relatively minimal and the fibre at infinity is smooth.
Let $P\in \QQ[t]$ be the separable monic 
polynomial of degree $r$ that vanishes at the points of 
$\AA_\QQ^1=\PP_\QQ^1\setminus \{\infty\}$ which produce degenerate 
fibres. Our hypotheses are therefore equivalent to a 
factorisation 
$P(t)=(t-e_1)\cdots (t-e_r)$, with 
 $e_1,\ldots,e_r\in \QQ$ pairwise distinct, and  
 $a_1,\ldots,a_r\in \QQ^*\setminus {\QQ^*}^2$ such that 
each irreducible component of the fibre above $e_i$ is defined over 
$\QQ(\sqrt{a_i})$, for $1\leq i\leq r$. 

The elements of the Brauer group 
$\Br(X)=H^2_{\mathrm{\acute{e}t}}(X,\mathbb{G}_m)$ 
have the following explicit description.
Since $X(\QQ)\not=\emptyset$ the natural map $\Br(\QQ)\to \Br(X)$
is injective. Let
$$\delta:(\ZZ/2\ZZ)^r \rightarrow \QQ^*/{\QQ^*}^2$$ be the map
that sends $(n_1,\ldots,n_r)\in (\ZZ/2\ZZ)^r$ to the class of 
$\prod_{i=1}^r a_i^{n_i}$ in $\QQ^*/\QQ^{*2}$. 
By the Faddeev reciprocity law we have $a_1\cdots a_r\in{\QQ^*}^2$,
so that $(1,\ldots,1)\in{\rm Ker}(\delta)$.
For $1\leq i \leq r$, the quaternion algebras $(a_i,t-e_i)$ 
form classes in $\Br(\QQ(t))$. An integral linear combination 
$\sum_{i=1}^r n_i(a_i,t-e_i)$ gives rise to an element of $\Br(X)$
if and only if $(n_1,\ldots,n_r)\in {\rm Ker}(\delta)$. 
This defines a homomorphism ${\rm Ker}(\delta)\to\Br(X)/\Br(\QQ)$.
It is well known that it is surjective with the kernel 
generated by $(1,\ldots,1)$
(cf.\ \cite[Prop.~7.1.2]{skoro}).
The second part of Theorem~\ref{t:1} states that 
$X(\QQ)$ is dense in $X(\A)^{\Br}$, under the product of $v$-adic 
topologies, where $X(\A)^{\Br}$ denotes the left kernel in the 
Brauer--Manin pairing $X(\A)\times \Br(X)\rightarrow \ZZ/2\ZZ$.
We have $\Br(X)=\Br(\QQ)$ if and only if
${\rm Ker}(\delta)$ is generated by $(1,\ldots,1)$,
in which case $X$ satisfies weak approximation.

An important feature of Theorem \ref{t:1} is that it covers, 
unconditionally, conic bundle surfaces with arbitrarily many 
degenerate fibres. It
can be applied, for example, to the surfaces given by the equation
$$f(t)x^2+g(t)y^2+h(t)z^2=0,$$
where $t$ is a coordinate function on $\AA_\QQ^1$, 
$(x:y:z)$ are homogeneous coordinates
in $\PP^2_\QQ$, and $f,\,g,\,h$ are products of linear polynomials
with rational coefficients. 
In \S \ref{s:DP} we use Theorem~\ref{t:1} to construct explicit 
families of minimal del Pezzo surfaces $X$ of degree 1 and 2 over $\QQ$,
for which the set $X(\QQ)$ is non-empty and dense in $X(\A)^{\Br}$.

\medskip

The proof of Theorem \ref{t:1} rests upon a confirmation of 
the Hasse principle and weak approximation for 
an auxiliary class of varieties.  For the moment, let 
$a_1,\dots, a_r\in \QQ^*\setminus {\QQ^*}^2$  and let 
$f_1,\ldots,f_r\in \QQ[u_1,\ldots,u_s]$ be a system of pairwise 
non-proportional homogeneous linear polynomials,  with $s\geq 2$.
We consider smooth varieties 
$\V\subset \AA_\QQ^{2r+s}$, defined by 
\begin{equation}\label{eq:torsor}
0\neq x_i^2-a_iy_i^2 = f_i(u_1,\ldots,u_s), \quad  i=1,\ldots,r.
\end{equation}
We will establish the following result in \S \ref{s:AC}. 

\begin{theorem}\label{t:ut}
$\V(\QQ)$ is Zariski dense in $\V$ as soon as it is non-empty.
Furthermore, $\V$ satisfies the Hasse principle and 
weak approximation. 
\end{theorem}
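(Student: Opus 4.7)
The plan is to reformulate $\V(\QQ)$ in terms of norm conditions. Let $K_i := \QQ(\sqrt{a_i})$. A point $(\x,\y,\u) \in \V(\QQ)$ is the datum of $\u \in \QQ^s$ with $f_i(\u) \ne 0$ and $f_i(\u) \in N_{K_i/\QQ} K_i^*$ for $i=1,\ldots,r$. By Hasse's norm theorem for quadratic extensions, the norm condition is equivalent to the triviality of all local Hilbert symbols $(a_i, f_i(\u))_v$. Weak approximation (and, a fortiori, the Hasse principle) for $\V$ is thus equivalent to the following: given any adelic tuple $(\u_v)_v$, with $\u_v \in \QQ_v^s$, $f_i(\u_v) \ne 0$, and $(a_i, f_i(\u_v))_v = 1$ for every $i$ and $v$, and given a finite set $S$ of places, one can find $\u \in \QQ^s$ arbitrarily close to $\u_v$ for $v \in S$ with each $f_i(\u)$ a nonzero global norm from $K_i$.

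My first step is to reduce to the subcase $s = 2$ by generic slicing. For $s \ge 3$, one fixes $u_3, \ldots, u_s \in \QQ$ very close to the corresponding coordinates of $\u_v$ at each $v \in S$, chosen so that the resulting affine-linear forms in $u_1, u_2$ remain pairwise non-proportional and $f_i(\u_v)$ remains non-zero; both conditions hold on a Zariski-dense set of slices. After clearing denominators, the residual task is a classical one: count pairs $(u_1, u_2) \in \ZZ^2$ lying in a large homothet of a fixed convex body in $\RR^2$ and satisfying prescribed congruence conditions modulo some integer $q$, for which all $r$ affine-linear forms are simultaneously norms from the respective $K_i$.

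The key input, alluded to in the abstract, is a Hardy--Littlewood-type asymptotic formula for this count, with a positive main term whenever the local data is non-obstructed. The proof of such an asymptotic rests on the Green--Tao--Ziegler theory of Gowers uniformity: one constructs a pseudorandom majorant for the indicator function of ``integers that are norms from $K_i$'' (more precisely, for a $W$-tricked and sieved version of it), applies a generalised von Neumann theorem to reduce the $r$-fold correlation along linear forms in two variables to its main term, and controls correlations with nilsequences via Ratner-type equidistribution on nilmanifolds. Once this formula is obtained with uniformity in $q$, the Hasse principle and weak approximation for $\V$ follow at once, and the Zariski density of $\V(\QQ)$ in $\V$ is then immediate from weak approximation together with smoothness of $\V$.

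The principal obstacle is the asymptotic formula itself: constructing a suitable pseudorandom majorant for norms from a quadratic field, and establishing the requisite Gowers-uniformity of the associated normalised counting function, is the deep part. By contrast, the norm-theoretic reformulation, the slicing to $s=2$, and the deduction of Zariski density from weak approximation are all essentially formal.
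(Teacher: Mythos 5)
Your overall strategy---counting integer points $\u$ in a large box with congruence conditions, subject to each $f_i(\u)$ being a nonzero norm from $K_i=\QQ(\sqrt{a_i})$, via a Hardy--Littlewood asymptotic proved by Green--Tao--Ziegler technology---is the right one in spirit, and your deduction of Zariski density from weak approximation is fine. But there is a genuine gap at the analytic heart. You propose to construct a pseudorandom majorant for the \emph{indicator function} of the set of integers that are norms from $K_i$. For $a_i<0$ this set is sparse (density of order $N/(\log N)^{1/2}$, as for sums of two squares), and no correlation asymptotic for its indicator along systems of linear forms is available; the majorant-plus-von-Neumann route does not go through for this function. What the paper actually uses (Matthiesen, \emph{Acta Arith.} 2012 and 2013) is an asymptotic for the sum $\sum_{\u}\prod_i R_i(f_i(\u))$ weighted by the \emph{representation functions} $R_i(n)=\#\{(x,y)\in\ZZ^2/\mathcal{E}_i: x^2-a_iy^2=n\}$, which have constant average order and are amenable to the nilpotent circle method. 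Since a point of $\V$ carries the coordinates $(x_i,y_i)$ and not just $\u$, counting with the weights $R_i$ is both the natural choice and sufficient for existence; your indicator-function formulation replaces a known theorem by an open problem.

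A second, smaller but real, gap is your claim that once the asymptotic is in hand, ``the Hasse principle and weak approximation follow at once.'' They do not: one must prove that the product of local densities is bounded away from zero. In the paper this occupies a substantial part of the argument---one needs $\beta_p'=1+O(p^{-2})$ at large primes (so the infinite product converges to something positive), and at the primes dividing the approximation modulus $M$ one needs auxiliary congruence conditions (\eqref{eq:technical'} and \eqref{eq:technical}) arranged in advance so that a Hensel-type identity $\rho_i(p^{k+1},A+\ell p^k)=p\,\rho_i(p^k,A)$ forces $G(p^{k+1})=p^{s+r}G(p^k)$ and hence $\beta_p>0$. Your proposal does not set up these conditions, and without them the positivity of the singular series is not automatic. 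Finally, the slicing to $s=2$ is unnecessary (the cited asymptotic holds for any finite-complexity system in $s$ variables) and creates its own difficulty: fixing $u_3,\dots,u_s$ near the local data only at places in $S$ gives no control over local solubility of the sliced system at places outside $S$, which you would then have to restore by hand.
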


The proof of Theorem \ref{t:ut} relies on recent work of Matthiesen \cite{lm,lm'}, which itself builds on fundamental work of Green and Tao \cite{GT} and Green--Tao--Ziegler \cite{GTZ}. Our approach is based on counting suitably constrained integer points on $\V$ and the shape of our asymptotic formula allows us to deduce the existence of a global solution close to any finite set of local solutions, provided that the system of equations is everywhere locally soluble.  This follows the pattern of the Hardy--Littlewood circle method, although it must be stressed that Theorem \ref{t:ut} is beyond the reach of the circle method.

The fact that Theorem \ref{t:1} follows from
Theorem \ref{t:ut} has been known for a long time 
(see \cite{ct-s-86}, \cite{ct-s-87}, \cite{s-conic}). We recall this argument
here, although Theorem \ref{t:1} is also a special  
case of Theorem \ref{t:1.1} below.
Let $X/\PP_\QQ^1$ be a conic bundle surface as in 
Theorem~\ref{t:1}. According to work of 
Colliot-Th\'el\`ene and Sansuc \cite[Thm.~2.6.4(iii)]{ct-s-87}, 
any universal torsor $\mathcal{T}$ over $X$ is $\QQ$-birationally
equivalent to 
$W_{\bla}\times C\times \AA_\QQ^1$, where $C$ is a conic over 
$\QQ$ and $W_{\bla}\subset \AA_\QQ^{2r+2}$ is the variety
defined by 
$$u-e_i v =\lambda_i (x_i^2-a_iy_i^2), \quad i=1,\ldots,r,$$
for suitable $\bla=(\la_1,\ldots,\la_r) \in ({\QQ^*})^{r}$.
An  application of 
Theorem \ref{t:ut} in the special case $s=2$ shows that all universal 
torsors $\mathcal{T}$ over $X$ satisfy the Hasse principle and weak approximation.
Since $X(\QQ)\neq \emptyset$ it therefore follows from the descent theory
of Colliot-Th\'el\`ene and Sansuc
\cite[Thm.~3.5.1, Prop.~3.8.7]{ct-s-87} that
$X(\QQ)$ is dense in $X(\A)^{\Br}$ under the product topology, 
as required for the second part of Theorem~\ref{t:1}.
This implies that {\em weak-weak approximation} holds, namely,
there is a finite set $S$ of places of $\QQ$ such that weak 
approximation holds away from $S$.
In particular, for almost all primes $p$ the set $X(\QQ)$ 
is dense in $X(\QQ_p)$ under the $p$-adic topology. 
This shows that the first part of Theorem \ref{t:1} follows from 
its second part.

\medskip

Recall that if $X\to B$ and $Y\to B$ are two varieties
with morphisms to the same base variety $B$, then
the fibred product $X\times_B Y$ can be defined as
the restriction of $X\times Y\to B\times B$ to the diagonal
$B\subset B\times B$. In \S \ref{descent} we shall employ 
`open descent', as in \cite{CTSk}, to prove the  following
result. 

\begin{theorem}\label{t:1.1}
Let $X_i/\PP_\QQ^1$, $i=1,\ldots,n$, be conic bundle surfaces
over $\QQ$, in which the degenerate fibres are all defined over $\QQ$. 
Let 
$$X=X_1\times_{\PP^1_\QQ}X_2\times_{\PP^1_\QQ}\cdots\times_{\PP^1_\QQ}X_n$$
be the fibred product. Assume that whenever two or more
of these conic bundles have
degenerate fibres over the same point of $\PP_\QQ^1$, the
components of their fibres at this point
are defined over the same quadratic field.
Then the Brauer--Manin obstruction is the only obstruction to 
the Hasse principle and weak approximation on any smooth and proper 
model of  $X$.
\end{theorem}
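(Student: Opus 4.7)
The strategy is to run the descent argument that deduces Theorem~\ref{t:1} from Theorem~\ref{t:ut}, but now applied to the fibred product $X$. The auxiliary variety to which Theorem~\ref{t:ut} will eventually be applied has $s\geq 2$, not necessarily $s=2$ as in the single-bundle case.

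First I would reduce to the situation where each $\pi_i\colon X_i\to\PP^1_\QQ$ is relatively minimal with smooth fibre at infinity, and pick a smooth proper model of $X$ together with its structure map $\pi\colon X\to\PP^1_\QQ$. Let $e_1,\dots,e_r\in\QQ$ be the distinct points of $\AA^1_\QQ$ above which \emph{some} $X_i$ has a degenerate fibre. By the compatibility hypothesis of the theorem, for each $j$ there is a well-defined class $a_j\in\QQ^*\setminus{\QQ^*}^2$ such that for every $X_i$ with a degenerate fibre above $e_j$, the two components of that fibre are defined over $\QQ(\sqrt{a_j})$. The quaternion algebras $(a_j,t-e_j)\in\Br(\QQ(t))$ for $j=1,\dots,r$ then control the vertical Brauer group of $X$ in exactly the same way as for a single conic bundle with $r$ degenerate fibres.

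Next, following the open descent formalism of~\cite{CTSk}, one constructs torsors $\mathcal T$ over the open subvariety $U\subset X$ lying above the smooth locus of $\pi$, of type rich enough to annihilate every class in $\Br(X)/\Br(\QQ)$. The key geometric claim is that any such $\mathcal T$ is $\QQ$-birational to
\[
V\times C_1\times\cdots\times C_n\times\AA^N_\QQ,
\]
where $V\subset\AA^{2r+s}_\QQ$ is of the form~\eqref{eq:torsor} for a suitable integer $s\geq 2$ and pairwise non-proportional linear forms $f_j(u_1,\dots,u_s)$, and each $C_i$ is a conic over $\QQ$ arising from the generic fibre of $\pi_i$. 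This extends the single-bundle description $W_{\bla}\times C\times\AA^1_\QQ$ recalled in the excerpt, the essential point being that the compatibility hypothesis collapses a system of equations indexed by \emph{all} degenerate fibres across the bundles into a system of exactly $r$ equations indexed by the distinct points $e_j\in\PP^1_\QQ$. This is precisely what keeps the $f_j$ pairwise non-proportional and therefore within the scope of Theorem~\ref{t:ut}.

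With this description, HP and WA for $\mathcal T$ are immediate: $V$ satisfies both by Theorem~\ref{t:ut}, each conic $C_i$ satisfies them by classical local--global theorems, and $\AA^N_\QQ$ does so trivially. The descent formalism then transfers this via~\cite[Thm.~3.5.1, Prop.~3.8.7]{ct-s-87} and the open-descent variant of~\cite{CTSk} to the density of $X(\QQ)$ in $X(\A)^{\Br}$ on any smooth proper model of $X$, giving both the Hasse principle and weak approximation modulo the Brauer--Manin obstruction. The principal obstacle I expect is the birational analysis in the preceding paragraph: one must verify rigorously that the fibred-product torsor decomposes as claimed and that the equations of $V$ are of the exact shape~\eqref{eq:torsor} with pairwise non-proportional $f_j$. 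The compatibility hypothesis enters precisely here; without it, two bundles sharing a degenerate point would force repeated or linearly dependent forms at that point, taking the system outside the scope of Theorem~\ref{t:ut}.
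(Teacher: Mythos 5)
Your overall strategy coincides with the paper's: open descent in the style of \cite{CTSk} using a torsor of a torus over the locus of smooth fibres, a birational splitting of that torsor into a product of conics and a variety of the shape \eqref{eq:torsor}, and then Theorem \ref{t:ut}. Two points need correction, though. First, the auxiliary variety still has $s=2$: the fibred product is taken over $\PP^1_\QQ$, so the linear forms remain $u-e_jv$ in the two homogeneous coordinates of the base; what grows with $n$ is the number of conic factors $C_1\times\cdots\times C_n$, not $s$. Second, the appeal to \cite[Thm.~3.5.1, Prop.~3.8.7]{ct-s-87} is not available here: that descent statement presupposes $X(\QQ)\neq\emptyset$ (it concerns universal torsors on a variety with a rational point), whereas for the fibred product the Hasse principle can genuinely fail --- that is the whole point of the theorem. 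The argument must instead go through Proposition \ref{p1} (that is, \cite[Prop.~1.1]{CTSk} combined with Harari's formal lemma and Poitou--Tate duality), and this in turn requires checking that the family of torsors $\W_\bla$, $\bla\in(\QQ^*)^r$, is closed under twisting by $\QQ$-torsors of the torus $T$ --- a verification you omit entirely.

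The step you flag as the ``principal obstacle'', the birational decomposition, is indeed the heart of the proof, and you leave it unproved. The paper's route is as follows. Let $U=\AA^1_\QQ\setminus\{e_1,\ldots,e_r\}$, let $Y_j\subset X_j$ be the inverse image of $U$ and $Y$ their fibred product over $U$. For each $j$ let $\W_\bla^{(j)}$ be the variety cut out by only those equations $u-e_iv=\lambda_i(x_i^2-a_iy_i^2)$ with $i$ ranging over the points where $X_j$ degenerates. By \cite[Thm.~2.6.4(ii)(a), Rem.~2.6.8]{ct-s-87} one has that $Y_j\times_U\W_\bla^{(j)}$ is birationally equivalent to $C_j\times\W_\bla^{(j)}$; composing with the forgetful morphism $\W_\bla\to\W_\bla^{(j)}$, which is compatible with the projections to $U$, yields that $Y_j\times_U\W_\bla$ is birationally equivalent to $C_j\times\W_\bla$, and iterating over $j$ gives $Y\times_U\W_\bla\sim C_1\times\cdots\times C_n\times\W_\bla$. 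Your diagnosis of where the compatibility hypothesis enters is correct: it guarantees that a single norm equation at each $e_j$ serves every bundle degenerating there, so only the pairwise non-proportional forms $u-e_jv$ appear and Theorem \ref{t:ut} applies.
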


Let $a_i\in \QQ^*\setminus \QQ^{*2}$ and $c_i\in\QQ^*$ for $i=1,\ldots,n$.
Given pairwise distinct rational numbers $e_1,\ldots,e_{2n}$,
Theorem \ref{t:1.1} can be applied to the 
intersection of quadrics 
$$
(u-e_{2i-1}v)(u-e_{2i}v)=c_i(x_i^2-a_iy_i^2), \quad i=1,\ldots,n,
$$
in $\PP_\QQ^{2n+1}$.
Indeed no two of the conic bundles in the fibred product have degenerate fibres over the same point of $\PP_\QQ^1$.
The interest here is that such varieties do not
necessarily have $\QQ$-points. In fact, counter-examples to
the Hasse principle and weak approximation are known (see \cite[\S 7]{ct-c-s}).  
Theorem \ref{t:1.1} tells us that all such counter-examples are explained by the Brauer--Manin obstruction. This was previously known only when $n=2$, by using 
a descent argument to reduce the problem to an  intersection of two quadrics in $\PP_\QQ^6$ covered by   \cite[Thm.~6.7]{CT}.

\medskip

It possible to generalise Theorem \ref{t:1} to families of 
higher-dimensional quadrics. By \cite[Prop.~3.9]{CT} any variety
with a surjective morphism to an open subset of affine space, 
such that the fibres are smooth projective 
quadrics of dimension at least 3, satisfies the Hasse
principle and weak approximation. Thus  we focus
on the case of a variety with a surjective map
to $\PP^1_\QQ$ such that the fibres are 2-dimensional quadrics.
Progress so far has been restricted to the case in which there 
are at most three geometric fibres which are quadrics of rank 
2 or less, as in \cite{CTSk} and \cite{Sk}. The following 
result will be proved in \S \ref{descent}.

\begin{theorem}\label{t:1.2}
Let $X$ be a smooth, proper and geometrically integral threefold
over $\QQ$ 
equipped with a surjective morphism $X\to\PP^1_\QQ$ such that the 
generic fibre is a $2$-dimensional quadric. 
If all the fibres that are not geometrically integral
are defined over $\QQ$, then
the Brauer--Manin obstruction is the only obstruction to 
weak approximation on $X$.
\end{theorem}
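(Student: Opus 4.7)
The plan is to parallel the proof of Theorem~\ref{t:1.1} by performing an open descent on $X$, as in \cite{CTSk}, in order to reduce the weak-approximation statement to the Hasse principle and weak approximation on a variety of the shape~\eqref{eq:torsor}, to which Theorem~\ref{t:ut} then applies. The restriction in previous work \cite{CTSk,Sk} to at most three degenerate fibres of rank $\leq 2$ stemmed from the lack of a sufficiently flexible solubility theorem for the corresponding descent variety, and Theorem~\ref{t:ut} is precisely what is needed to remove this restriction, allowing arbitrarily many such fibres.

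After possibly replacing $X$ by a smooth proper birational model, enumerate the non-geometrically integral fibres of $\pi:X\to\PP^1_\QQ$; by hypothesis these sit over $\QQ$-rational points $e_1,\ldots,e_r\in\AA^1_\QQ$, and the irreducible components of the rank-$\leq 2$ fibre above $e_i$ are defined over a quadratic extension $\QQ(\sqrt{a_i})$ with $a_i\in\QQ^*/\QQ^{*2}$. These data give rise to residues $(a_i,\,t-e_i)\in\Br(\QQ(t))$, and $\Br(X)/\Br(\QQ)$ is described by those linear combinations of these residues that extend to classes in $\Br(X)$, subject to a reciprocity condition analogous to the conic-bundle case. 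One now constructs, through open descent, a torsor $\mathcal{T}\to X^\circ$ (for some dense open $X^\circ\subset X$) of a type that accounts for the algebraic Brauer-Manin obstruction on $X$. A Picard/Cox-ring calculation, analogous to the conic-bundle computation of \cite[Thm.~2.6.4]{ct-s-87} but carried out for a quadric-surface fibration, should show that a smooth open subset of $\mathcal{T}$ is $\QQ$-birational to a product $\V\times Z$, where $\V\subset\AA^{2r+s}_\QQ$ is defined by~\eqref{eq:torsor} for some $s\geq 2$ and pairwise non-proportional linear forms $f_1,\ldots,f_r$, and $Z$ is an auxiliary $\QQ$-rational variety arising from the smooth generic fibre of $\pi$.

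Applying Theorem~\ref{t:ut} to $\V$ gives the Hasse principle and weak approximation there, and combined with the analogous classical statement for the rational variety $Z$, the same follows for $\mathcal{T}$. The descent formalism of \cite[Thm.~3.5.1, Prop.~3.8.7]{ct-s-87}, in its open variant from \cite{CTSk}, then transports this back to $X$, yielding the claim that the Brauer-Manin obstruction is the only obstruction to weak approximation on $X$. The principal obstacle is the explicit torsor calculation in the quadric-surface setting: one must determine $\Pic(X_{\bar\QQ})$ as a Galois module---which should be manageable because the hypothesis forces the Galois action to decompose into one-dimensional pieces indexed by the $a_i$---then unwind the resulting torsor equations to isolate the form~\eqref{eq:torsor}, and finally verify that the residual factor $Z$ is indeed $\QQ$-rational. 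This is the $2$-dimensional-quadric analogue of the conic-bundle computation underlying the proof of Theorem~\ref{t:1.1}, and the assumption that all non-geometrically integral fibres are defined over $\QQ$ is precisely what makes this analogue go through.
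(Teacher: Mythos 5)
Your overall descent framework (open descent via Proposition~\ref{p1}, reduction to a variety of type \eqref{eq:torsor}, input from Theorem~\ref{t:ut}) matches the paper's, but the central step of your argument contains a genuine gap. You assert that a Picard/Cox-ring computation ``should show'' that the relevant torsor $\mathcal{T}$ over $X^\circ$ is $\QQ$-birational to $\V\times Z$ with $Z$ a $\QQ$-rational variety coming from the generic fibre. This product decomposition is exactly what is available for conic bundles (where \cite[Thm.~2.6.4]{ct-s-87} gives $Y_j\times_U\W_\bla\sim C_j\times\W_\bla$, the conic $C_j$ being constant), but it fails for quadric surface bundles: the generic fibre is a $2$-dimensional quadric over $\QQ(t)$, and even after pulling back to the torsor there is no reason for it to acquire a rational point over the function field of the base, hence no reason for it to become rational or to split off as a constant factor. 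A quadric surface is governed by both a discriminant and a Clifford invariant, and the torsor construction does not trivialise this data. So the ``residual factor $Z$'' you need is not, in general, $\QQ$-rational, and the reduction to Theorem~\ref{t:ut} does not go through as stated.

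The paper avoids this by never splitting the torsor as a product. After passing to a relatively minimal \emph{admissible} quadric bundle model $X'$, it takes the torsor $\W_\bla$ defined by $u-e_iv=\lambda_i(x_i^2-a_iy_i^2)\neq 0$ together with the crucial extra equation $v=x_0^2-a_0y_0^2\neq 0$, where $a_0=a_1\cdots a_r$. It applies Theorem~\ref{t:ut} to $\W_\bla$ alone to produce a rational point $M$ close to the prescribed local points, with image $P\in U_0(\QQ)$, and then proves that the smooth quadric surface $Y_P$ is everywhere locally soluble by a prime-by-prime analysis at $p\notin S$: the norm equations force $\val_p(u-e_iv)$ to be even whenever $a_i$ is a non-square modulo $p$, which handles the case of even $\val_p(v)$ by reduction to a rank-$4$ form over $\FF_p$, while the equation $v=x_0^2-a_0y_0^2$ combined with $a_0=a_1\cdots a_r$ being a square rules out the remaining case of odd $\val_p(v)$ by a parity contradiction. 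This fibration-plus-local-solubility step, and the specific choice of torsor with the $a_0$-norm equation, are the ideas missing from your proposal; without them (or a proof of your claimed product decomposition, which I do not believe holds) the argument is incomplete.
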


\medskip

One can also deduce analogous statements 
for suitable higher-dimensional varieties. Let $m\geq 1$ and $n\geq 3$.
The equation
\begin{equation}
\sum_{i=1}^n f_i(\t) X_i^2=0,
\label{e3}
\end{equation}
defines a variety in  $\PP_\QQ^{n-1}\times\AA_\QQ^m$, where
$\t=(t_1,\ldots,t_m)$ and $f_1,\ldots,f_n\in \QQ[\t]$.  
The following result will be established in 
\S \ref{s:FM} using the fibration method.  

\begin{theorem}\label{fib}
The Brauer--Manin obstruction is the only obstruction to weak 
approximation on smooth and proper models of the varieties \eqref{e3},
provided that $f_1, \ldots,f_n$ are products of non-zero
linear polynomials defined over $\QQ$.
\end{theorem}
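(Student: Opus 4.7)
The plan is to establish Theorem~\ref{fib} by the fibration method, using the earlier theorems of the paper to supply the base case $m=1$. Let $X_0$ denote the variety cut out by \eqref{e3} in $\PP^{n-1}_\QQ\times\AA^m_\QQ$, let $X$ be a smooth proper model of $X_0$, and write $\pi:X_0\to\AA^m_\QQ$ for the projection to the $\t$-coordinates.

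First I would treat the base case $m=1$, in which the fibres of $\pi$ are $(n-2)$-dimensional quadrics over $\AA^1_\QQ$. Since each $f_i$ is a product of $\QQ$-linear forms, every degenerate fibre lies above a $\QQ$-rational point of $\PP^1_\QQ$. If $n=3$, the fibre above a zero $t_0$ of some $f_i$ is generically a conic of rank~$2$ whose components are defined over a quadratic extension $\QQ(\sqrt{a})$ with $a\in\QQ^*$, so after passing to the relatively minimal model Theorem~\ref{t:1} applies. If $n=4$, the non-geometrically-integral fibres are those of rank at most $2$, which occur where at least two of the $f_i$ vanish simultaneously; these base points are $\QQ$-rational, so Theorem~\ref{t:1.2} applies. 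If $n\ge 5$, the generic fibre is a smooth quadric of dimension at least $3$, and weak approximation over the smooth locus follows from \cite[Prop.~3.9]{CT}; every singular fibre contains the $\QQ$-rational cone vertex $X_i=1$, $X_j=0$ for $j\ne i$, where $i$ is any index with $f_i(t_0)=0$, so the classical fibration method over $\PP^1_\QQ$ extends the conclusion to $X$.

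To reduce the general case $m\ge 2$ to the base case, I would fix an adelic point $(P_v)\in X(\A)^{\Br}$ and a finite set $S$ of places of $\QQ$, and choose a $\QQ$-rational line $\ell\subset\AA^m_\QQ$ passing $v$-adically close to $\pi(P_v)$ for each $v\in S$. Parameterising $\ell$ as $\t=\a+s\b$ with $\a,\b\in\QQ^m$, each linear factor of each $f_i$ pulls back to a linear polynomial in $s$, so the restricted variety $X_\ell$ satisfies the hypothesis of Theorem~\ref{fib} with $m=1$. The hard part is to arrange for $\ell$ to be chosen so that $(P_v)$ restricts to an adelic point of $X_\ell$ which is still orthogonal to $\Br(X_\ell)$: this is the usual vertical-Brauer-group issue in the fibration method, and I would handle it via Harari's formal lemma together with the open-descent techniques of \cite{CTSk} already employed in \S\ref{descent}. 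Once this is in place the base case yields a $\QQ$-rational point of $X_\ell\subset X$ approximating $(P_v)$ at the places in $S$, as required.
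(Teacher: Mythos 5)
Your overall architecture matches the paper's: reduce to $n\in\{3,4\}$ via \cite[Prop.~3.9]{CT}, recognise the one\nobreakdash-parameter case as a pencil of conics or of $2$-dimensional quadrics with all degenerate fibres over $\QQ$ (Theorems \ref{t:1} and \ref{t:1.2}), and then run a fibration argument in the remaining $m-1$ parameters. But the step you yourself flag as ``the hard part'' is exactly where the paper's key idea lives, and your sketch does not close it. The paper does not pick lines and then try to compare $\Br(X)$ with $\Br(X_\ell)$; instead, after normalising so that each linear factor satisfies $l_i(1,0,\ldots,0)=1$, it fibres the variety over $\AA^{m-1}_\QQ$ by forgetting $t_1$, and observes that this fibration admits a \emph{section}: $X_1=1$, $X_2=\cdots=X_n=0$, with $t_1$ determined by $l_1(\t)=0$. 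The generic fibre is geometrically integral and rational and the section meets its smooth locus, so Harari's fibration theorem \emph{with a section} \cite[Thm.~4.3.1]{harari} applies directly, and the vertical-Brauer-group difficulty never arises. In your version the restricted varieties $X_\ell$ are conic or quadric bundles whose Brauer groups can be arbitrarily large and vary with $\ell$; invoking ``Harari's formal lemma together with open descent'' does not show that an adelic point orthogonal to $\Br(X)$ can be moved to an adelic point of some $X_\ell$ orthogonal to $\Br(X_\ell)$. Absent a section (or a splitness hypothesis on the codimension-one fibres of the chosen fibration), this is a genuine obstruction to completing the argument, not a routine verification.

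A secondary gap is the missing normalisation before fibring. If some $l_i^2$ divides an $f_j$, or if (for $n=4$) some $l_i$ divides three of the $f_j$, then the fibres over $l_i=0$ have rank at most $1$, the restricted pencils are not admissible quadric bundles, and the generic fibre of the fibration need not be geometrically integral; the paper removes these degeneracies at the outset by multiplying \eqref{e3} and the variables $X_j$ by suitable rational functions, arranging that each $l_i$ divides exactly one $f_j$ when $n=3$ and at most two when $n=4$. One should also dispose separately of the case where \eqref{e3} is a constant quadric (Hasse--Minkowski). With those caveats, your treatment of the base case $m=1$ is consistent with the paper's use of Theorems \ref{t:1} and \ref{t:1.2}.
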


As mentioned above, in the case $n\geq 5$
the smooth Hasse principle and weak approximation are known to hold
for \eqref{e3} without any assumption on $f_1,\ldots,f_n$
(see \cite[Prop.~3.9]{CT}).

\begin{ack}
While working on this paper the first two authors were 
supported by EPSRC grant  \texttt{EP/E053262/1}, 
and the first author was further supported by ERC grant \texttt{306457}.
The third author was supported by the 
{\em Centre Interfacultaire Bernoulli} of the Ecole Polytechnique
F\'ed\'erale de Lausanne, whose hospitality he is grateful for. 

The authors are grateful to Jean-Louis Colliot-Th\'el\`ene
for detailed comments which have helped to 
improve the exposition of our argument. They would also like to
thank him for extremely useful discussions 
leading to a simplification of 
the proof of Theorem \ref{t:1.2} in the first version 
of this paper (available as \texttt{arXiv:1209.0207v1}).

\end{ack}

\section{Arithmetic of $\V$}
\label{s:AC}

In order to establish  Theorem \ref{t:ut} it will  suffice to  
assume that 
the varieties $\V$ in \eqref{eq:torsor} 
are everywhere locally soluble and to show, under this hypothesis, that 
$\V(\QQ)$ is non-empty and that
$\V$ satisfies weak approximation.
Since individual conics with $\QQ$-points satisfy weak approximation, it will suffice to place
weak approximation conditions on the variables $\u=(u_1,\dots,u_s)$ in $\V$
alone.

After a change of variables we may henceforth  assume without loss of generality that $a_1,\ldots,a_r$ are integers in \eqref{eq:torsor} and that 
$f_1,\dots,f_r$ are all defined over $\ZZ$.  
Let us write 
$
\{1,\ldots,r\}=I_- \cup I_+,
$
where $i\in I_{\pm}$ if and only if $\sign(a_i)=\pm$.

Let $\Omega$ denote the set of places of $\QQ$.  
For any $v\in \Omega$, 
let  $|\cdot|_v$ denote the $v$-adic norm.
When $v=\infty$ we will simply write
$|\cdot|_\infty=|\cdot|.$   
Let $S\subset \Omega$ be any finite set and let $\ve>0$. 
Without loss of generality we may suppose that $S$ contains the infinite place
and all finite places  $v$ bounded by $L$, for some parameter $L$ to be specified in due course.
We are given points 
$(\x^{(v)},\y^{(v)}, \u^{(v)})\in \V(\QQ_v)$ for every 
$v\in\Omega$.
Our task is to show that there exists a point 
$(\x,\y, \u)\in \V(\QQ)$ such that 
\begin{equation}\label{eq:local-uv}
|\u-\u^{(v)}|_v<\ve,
\end{equation}
for each  $v\in S$.

On rescaling appropriately it suffices to assume that  the solutions 
$(\x^{(v)},\y^{(v)}, \u^{(v)})$ we are given belong
to $\ZZ_v^{2r+s}$ for every finite $v\in S$.
Applying the Chinese remainder theorem 
for $\ZZ^{2r+s}$ we can then produce an
integer vector
$(\x^{(M)},\y^{(M)}, \u^{(M)})$ such that 
\begin{align*}
 |\x^{(M)}-\x^{(v)}|_v<\ve, \quad
 |\y^{(M)}-\y^{(v)}|_v<\ve, \quad
 |\u^{(M)}-\u^{(v)}|_v<\ve, 
\end{align*}
for all finite $v \in S$.
We will seek $(\x,\y,\u)\in \mathcal{V}(\ZZ)$ satisfying the necessary local conditions.
For the finite places \eqref{eq:local-uv} becomes
\begin{equation}\label{eq:local-cong} 
u_j\equiv u^{(M)}_j \bmod{M}, \quad j=1,\ldots,s,
\end{equation}
for an appropriate modulus $M\in\NN$.
Suppose $\val_p(M)=m$ for $m\in \NN$. 
For technical reasons we require that 
\begin{equation}\label{eq:technical'}
m \geq  \max_{1\leq i\leq r}\{\val_p(4a_i)\}
\end{equation}
and 
\begin{equation}\label{eq:technical}
 f_i(\u^{(M)}) \not\= 0 \bmod{p^{m}}, \quad i=1,\ldots,r.
\end{equation}
Since $f_i(\u^{(v)}) \neq 0$ in $\QQ_v$ we can arrange for these
properties to hold providing that we take $\ve$ to be  sufficiently small.

For the infinite place we will seek points satisfying 
\begin{equation}\label{eq:local-inf}
 |\u-B\u^{(\infty)}|<\ve B,
 \end{equation}
with $B=C^2$ for a positive integer $C\equiv 1 \bmod{M}$ tending to infinity.
We have  $f_i(\u^{(\infty)})>0$ for $i\in I_-$,
since
$(\x^{(\infty)},\y^{(\infty)}, \u^{(\infty)})$ belongs to $\V(\RR)$. 
For $\ve$ sufficiently small, therefore,  any $\u \in \RR^s$ satisfying
\eqref{eq:local-inf} will  produce positive values of $f_i(\u)$ for
$i\in I_-$.
Finally, it is clear that any solution 
$(\x,\y,\u)\in \V(\ZZ)$
satisfying \eqref{eq:local-cong} and \eqref{eq:local-inf} will give rise to a solution
$(C^{-1}\x,C^{-1}\y,C^{-2}\u)\in \V(\QQ)$
satisfying our original condition \eqref{eq:local-uv}.

Let  $q_i(x,y)=x^2-a_iy^2$, for $i=1,\ldots,r$.
This is a primitive binary quadratic form of discriminant $4a_i$, which is positive definite for $i\in I_-$ and indefinite for $i\in I_+$.
For $d\leq -4$ let
$$
w(d)=\begin{cases}
4, &\mbox{if $d=-4$,}\\
2, &\mbox{if $d<-4$,}
\end{cases}
$$
and for $d>0$ let $\eta(d)$ denote the fundamental unit of $\QQ(\sqrt{d})$.
Let us call a solution 
$(\x,\y,\u)\in \ZZ^{{2r+s}}$ of \eqref{eq:torsor} {\em primary} if the 
pair $(y_i,z_i)$ lies in a fundamental domain for the action of the group of automorphs $\mathcal{E}_i$ of $q_i$, for $i=1,\ldots, r$.
Our strategy will be to estimate asymptotically, as $B\rightarrow \infty$, the total number $N(B)$ of primary solutions
$(\x,\y,\u)\in \ZZ^{{2r+s}}$
of  \eqref{eq:torsor}, which satisfy \eqref{eq:local-cong} and \eqref{eq:local-inf}.

We will henceforth view $\ve, M$, together with the coefficients of $\V$ and 
$\u^{(M)},\u^{(\infty)}$ as being fixed once and for all.  Any implied constants in this section 
will therefore be allowed to depend on these quantities.
Given  $n\in \ZZ$, we define the representation functions
$$
R_i(n)=\#\{(x,y)\in \ZZ^2/\mathcal{E}_i: q_i(x,y)=n\}, 
$$ 
for $i=1,\ldots,r$,  with  $R_i(n)=0$ if $n\leq 0$ and $i\in I_-$.
We may clearly write
$$
N(B)=
\sum_{\substack{
\u \in \ZZ^s\\
\mbox{
\scriptsize{
 \eqref{eq:local-cong} and \eqref{eq:local-inf} hold}}}}
\prod_{i=1}^r
R_i(f_i(\u)).
$$
The success of our investigation rests upon work of the 
 second author
\cite{lm,lm'}, who has shown how 
recent innovations 
of Green and Tao \cite{GT} and Green--Tao--Ziegler \cite{GTZ} 
in additive combinatorics can 
be brought to bear on sums like $N(B)$ for arbitrary $r$.

We eliminate the constraint \eqref{eq:local-cong} in $N(B)$
by writing 
$u_j=u_j^{(M)}+Mt_j$ for $j=1,\ldots,s$. This leads to the expression
$$
N(B)=
\sum_{\substack{\t\in \ZZ^s\cap K}}
\prod_{i=1}^r
R_i(g_i(\t)),
$$
where $g_i(\t)=
f_i(\u^{(M)}+M\t)$ and 
$$
K=
\left\{\t\in \RR^s: 
 \begin{array}{l}
 |M\t+\u^{(M)}-B\u^{(\infty)}|<\ve B 
 \end{array}
\right\}.
$$
This region is convex and 
contained in $[-cB,cB]^s$ 
for an appropriate absolute positive constant $c$.
It has measure $(2\ve M^{-1}B)^s\gg B^s$.
Our choice of $\ve$ ensures that  
$g_i(K)$ is positive for every $i \in I_-$.
Moreover,
 $(g_1,\ldots,g_r):\ZZ^s\rightarrow \ZZ^r$ 
defines a system of linear polynomials of `finite complexity', in the language of Green and Tao \cite{GT}.
Given $A\in \ZZ$ and $q\in \NN$, let
$$
\rho_i(q;A)=\#\{ (x,y)\in (\ZZ/q\ZZ)^2: x^2-a_iy^2\equiv A \bmod{q}\}.
$$
It therefore follows from   \cite[Thm.~1.1]{lm'} that
\begin{equation}\label{eq:final}
N(B)=\beta_\infty \prod_p \beta_p +o(B^s), 
\end{equation}
as $B\rightarrow \infty$. Here the main term is a product of local densities, given by
$$
\beta_\infty = 
\meas(K)\prod_{i\in I_-}
\frac{\pi }{w(4a_i)\sqrt{|a_i|}} \prod_{j\in I_+} \frac{\log \eta(a_j)}{\sqrt{a_j} }
$$
and 
$$
\beta_p=\lim_{k\rightarrow \infty} p^{-(s+r)k} G(p^k),
$$
for each prime $p$,
where
\begin{align*}
G(p^k)&=
\sum_{\substack{\t\in (\ZZ/p^k\ZZ)^s }}
\prod_{i=1}^r
\rho_i(p^k;g_i(\t))\\
&=
\#\left\{
(\x,\y,\t)\in (\ZZ/p^k\ZZ)^{2r+s}:  
\begin{array}{l}
x_i^2-a_iy_i^2\equiv g_i(\t) \bmod{p^k}\\
\mbox{for $i=1,\ldots,r$}
\end{array}
\right\}.
\end{align*}
Since  $\beta_\infty\gg \meas(K)\gg  B^s$, we see that in order to complete the proof of  Theorem \ref{t:ut} it remains to show that 
$\prod_p \beta_p \gg1 $
in \eqref{eq:final}.

For each prime $p$, let
$$
\beta'_p = 
\lim_{k\rightarrow \infty} p^{-(s+r)k}
\sum_{\substack{\u \in (\ZZ/p^k\ZZ)^s }}
\prod_{i=1}^r
\rho_i(p^k;f_i(\u)) 
$$
be the local factor associated to the original system of equations.
By \cite[Lemma~8.3]{lm} these factors satisfy $\beta'_p = 1 + O(p^{-2})$. 
We may now specify the parameter $L=O(1)$ to be such that $\beta'_p > 0$ for all $p>L$.
For primes $p \nmid M$ this choice of $L$ ensures that
$$
\prod_{p\nmid M}\beta_p = \prod_{p\nmid M}\beta'_p \gg 1,
$$
since the change of variables is non-singular in this case.
Our final task is to show that $\beta_p > 0$ for primes $p\mid M$. 
Suppose $\val_p(M)=m>0$. 
We have arranged things so that 
\eqref{eq:technical'} and \eqref{eq:technical} hold.
The integer vector $(\x^{(M)},\y^{(M)}, \u^{(M)})$ satisfies \eqref{eq:torsor} modulo $M$, which implies $G(p^{m}) \geq p^{sm}$.
To analyse $G(p^{k})$ for $k>m$, we shall employ
\cite[Cor.~6.4]{lm}, which yields
$$
\rho_i(p^k,A) = \frac{1}{p} \rho_i(p^{k+1},A+\ell p^{k})
$$
for any $\ell \in\ZZ/p\ZZ$, providing that $k\geq \val_p(4a_i)$ and $A\not\=0\bmod{p^{k}}$.
Since these conditions hold when $A=g_i(\t)$ and  $\t \in \ZZ^s$, for $k>m$,
we deduce that
$
G(p^{k+1})= p^{s+r}G(p^{k}).
$
Hence
$$
\beta_p 
= p^{-(s+r)m} G(p^{m}) 
\geq p^{-rm}>0
$$
for $p\mid M$. This establishes the desired lower bound for the product 
of local densities and so concludes the proof of Theorem \ref{t:ut}.

\section{Descent} \label{descent}

Let $Y$ be a variety over a number field $k$, and let
$f:Z\to Y$ be a torsor of a $k$-torus $T$. We write $\A_k$
for the ring of ad\`eles of $k$. Specialising the torsor at an adelic
point defines the evaluation map $Y(\A_k)\to \prod_v H^1(k_v,T)$,
where the product is taken over all completions $k_v$ of $k$.
Let $Y(\A_k)^f$ be the set of adelic points
for which the image of the evaluation map is contained in the
image of the natural map $H^1(k,T)\to \prod_v H^1(k_v,T)$.
It is clear that the diagonal image of $Y(k)$ in $Y(\A_k)$
is in $Y(\A_k)^f$.

There is an equivalent way to define $Y(\A_k)^f$.
Up to isomorphism, 
the $k$-torsors $R$ of $T$ are classified by their classes $[R]\in
H^1(k,T)$. The twist of $f:Z\to Y$
by $R$ is defined as the quotient of $Z\times R$ by the diagonal
action of $T$, with the morphism to $Y$ induced by the first projection.
We denote the twisted torsor by $f^R:Z^R\to Y$.
Then $Y(\A_k)^f$ is the union of the images of 
projections $f^R:Z^R(\A_k)\to Y(\A_k)$, 
for all $[R]\in H^1(k,T)$ (see \cite[\S 5.3]{skoro}).

The following slight variation on \cite[Prop.~1.1]{CTSk} was found
after our discussions with J.-L.~Colliot-Th\'el\`ene.

\begin{proposition} \label{p1}
Let $X$ be a smooth geometrically integral variety
over a number field $k$. 
Let $Y\subset X$ be a dense open set, and let
$f:Z\to Y$ be a torsor of a $k$-torus $T$. 
Then $X(\A_k)^\Br\not=\emptyset$ implies $Y(\A_k)^f\not=\emptyset$.
If $X$ is proper, then $X(\A_k)^\Br$ is contained in the
closure of $Y(\A_k)^f$ in $X(\A_k)=\prod_v X(k_v)$.
In this case, if all the twists of $Y$ by $k$-torsors of $T$
satisfy the Hasse principle and weak approximation,
then $X(k)$ is dense in $X(\A_k)^\Br$.
\end{proposition}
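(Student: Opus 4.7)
The plan is to combine a standard descent description of $Y(\A_k)^f$ as a Brauer-orthogonal set with a smooth approximation argument, controlled by a Harari-type formal lemma. Since $T$ is a $k$-torus, the Colliot-Th\'el\`ene--Sansuc descent theorem (see \cite[Ch.~6]{skoro}) identifies $Y(\A_k)^f$ with the set of adelic points $(Q_v)\in Y(\A_k)$ that are orthogonal, under the Brauer--Manin pairing, to a specific subgroup $B_f\subset\Br_1(Y)$ determined by the type of $f$. Assertions (1) and (2) therefore reduce to the following: given $(P_v)\in X(\A_k)^{\Br}$ and, for (2), a prescribed adelic neighbourhood of it, produce $(Q_v)\in Y(\A_k)$ in that neighbourhood satisfying $\sum_v \mathrm{inv}_v\, A(Q_v)=0$ for every $A\in B_f$.

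The first ingredient is geometric. Smoothness of $X$ together with $Y\subset X$ being dense open ensures, via the implicit function theorem, that $Y(k_v)$ is $v$-adically dense in $X(k_v)$ at every place $v$; hence every adelic neighbourhood of $(P_v)$ in $X(\A_k)$ contains points of $Y(\A_k)$. Properness of $X$ enters in (2) only to guarantee good behaviour of the product topology on $X(\A_k)=\prod_v X(k_v)$ with respect to such neighbourhoods.

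The second ingredient---the heart of the argument---is to arrange that such a perturbation $(Q_v)$ be orthogonal to $B_f$. Elements $A\in B_f$ need not extend to $\Br(X)$, but by purity their residues along the divisorial components of $X\setminus Y$ are controlled, as Galois characters of the residue fields, by the type of $f$. Consequently, the quotient $B_f/(B_f\cap\Br(X))$ is small and finitely generated by residue data. Starting from the orthogonality of $(P_v)$ against $\Br(X)$, one then invokes Harari's formal lemma: at an auxiliary finite set of places of good reduction, disjoint from those where approximation is prescribed, the local evaluation maps $Y(k_v)\to\Br(k_v)$ associated to a finite set of generators of $B_f$ modulo $\Br(X)$ are simultaneously surjective, allowing one to adjust $(Q_v)$ at those places to cancel any residual pairing. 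This is the main technical obstacle, and it is precisely where the torus hypothesis on $T$ is used, to ensure $B_f\subset\Br_1(Y)$ with residues controlled by purity.

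Part (3) is then formal. If each twist $f^R:Z^R\to Y$ satisfies the Hasse principle and weak approximation, then whenever $Z^R(\A_k)\neq\emptyset$ the rational points $Z^R(k)$ are dense in $Z^R(\A_k)$, so $f^R(Z^R(k))\subset Y(k)$ is dense in $f^R(Z^R(\A_k))$. Taking the union over $[R]\in H^1(k,T)$ shows that $Y(k)$ is dense in $Y(\A_k)^f$. Combined with (2), this yields $X(k)\supseteq Y(k)$ dense in $X(\A_k)^{\Br}$, as required.
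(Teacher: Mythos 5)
Your proposal is correct and follows essentially the same route as the paper: one identifies $Y(\A_k)^f$ with the set of adelic points orthogonal to a finite subgroup of $\Br(Y)$ attached to the torsor (in the paper, $B=[Z]\cup H^1(k,\hat T)$, with the identification $Y(\A_k)^B=Y(\A_k)^f$ coming from Poitou--Tate duality for tori), passes from $X(\A_k)^{\Br}$ to that set using Harari's formal lemma (the paper cites \cite[Prop.~1.1]{CTSk} for exactly this step), and deduces the final assertion formally from the description of $Y(\A_k)^f$ as the union of images of $Z^R(\A_k)$ over all twists. The only difference is presentational: you sketch the content of the formal-lemma step (density of $Y(k_v)$ in $X(k_v)$, adjustment at auxiliary places) where the paper invokes it as a black box.
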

\begin{proof}
Let $\hat T$ be the group of homomorphisms of algebraic groups 
$T\times_k\bar k\to \mathbb{G}_{m,\bar k}$. Equipped with discrete 
topology, $\hat T$ is a continuous ${\rm Gal}(\bar k/k)$-module.
The natural pairing of discrete ${\rm Gal}(\bar k/k)$-modules
$T(\bar k)\times \hat T \to \bar k^*$
gives rise to the $\cup$-product pairing
$$\cup: H^1_{\rm\acute et}(Y,T)\times H^1(k,\hat T) \to 
H^1_{\rm\acute et}(Y,T)\times H^1_{\rm\acute et}(Y,\hat T) \to 
H^2_{\rm\acute et}(Y,\mathbb{G}_m)=\Br(Y),$$
cf. \cite[pp.~63-64]{skoro}. 
Let $[Z]\in H^1_{\rm\acute et}(Y,T)$ be
the class of the torsor $Z/Y$, and let
$B\subset \Br(Y)$ be the subgroup $[Z]\cup H^1(k,\hat T)$.
Since $H^1(k,\hat T)$ is finite, $B$ is also finite.
Let $Y(\A_k)^B$ be the set of adelic points of $Y$ that are
orthogonal to $B$ with respect to the Brauer--Manin pairing.
By \cite[Prop. 1.1]{CTSk} (based on Harari's `formal lemma'
\cite[Cor. 2.6.1]{harari}) 
we have $X(\A_k)^{B\cap\Br(X)}\not=\emptyset$
if and only if $Y(\A_k)^B\not=\emptyset$, and 
the latter set is dense in the former when $X$ is proper. 
Since $X(\A_k)^\Br\subset X(\A_k)^{B\cap\Br(X)}$,
it remains to prove that $Y(\A_k)^B=Y(\A_k)^f$. 
This is a well known consequence of the Poitou--Tate duality for tori,
see, e.g., the proof of statement (2) in \cite[pp.~115, 119--121]{skoro}.
\end{proof}

\begin{proof}[Proof of Theorem \ref{t:1.1}]
Without loss of generality we assume that $X_j\to\PP^1_\QQ$
is relatively minimal and the fibre at infinity
of $X_j\to\PP^1_\QQ$ is smooth, for each $j=1,\ldots,n$.
Then there are $e_1,\ldots, e_r$ in $\QQ=\AA^1_\QQ(\QQ)$
such that the restriction of $X_j\to\PP^1_\QQ$ to $\PP^1_\QQ
\setminus\{e_1,\ldots, e_r\}$ is a smooth morphism, for $j=1,\ldots,n$.
By assumption, for $i=1,\ldots,r$ there exists 
$a_i\in\QQ^*\setminus\QQ^{*2}$, 
defined up to a square, such that the fibre of each $X_j\to\PP^1_\QQ$
at $e_i$ is either a smooth conic or a union of two conjugate
lines defined over $\QQ(\sqrt{a_i})$. 

Let $U=\AA^1_\QQ\setminus\{e_1,\ldots,e_r\}$. For $j=1,\ldots,n$
define $Y_j\subset X_j$ as the inverse image of $U\subset\PP^1_\QQ$, 
and let $Y$ be the fibred product of $Y_1,\ldots,Y_n$ over $U$.
To apply Proposition \ref{p1} we now introduce a certain torsor
over $Y$.
Let $\W_\bla\subset\AA^{2r+2}_\QQ$, for $\bla\in(\QQ^*)^{r}$, be the variety given by 
\begin{equation}
u-e_i v =\lambda_i (x_i^2-a_iy_i^2), \quad i=1,\ldots,r,
\quad \quad v\prod_{i=1}^r(u-e_iv)\not=0. \label{W}
\end{equation} 
The morphism $\W_\bla\to U$ that sends the point $(u,v,x_i,y_i)$ 
to the point with the coordinate $t=u/v$, is a torsor of
the following $\QQ$-torus $T$:
$$v=x_1^2-a_1y_1^2=\cdots=x_r^2-a_r y_r^2\not=0.$$
The fibred product $Y\times_{U}\W_\bla$ is a $Y$-torsor of $T$,
for any $\bla$. 

The $\QQ$-torsors of $T$ are the affine varieties $R_\c$ given by
$$v=c_1(x_1^2-a_1y_1^2)=\cdots=c_r(x_r^2-a_r y_r^2)\not=0,$$
where $\c=(c_1,\ldots,c_r)\in (\QQ^*)^r$.
The isomorphism classes of $\QQ$-torsors of $T$ bijectively correspond
to $\c\in (\QQ^*)^r$ up to
a common non-zero rational multiple and multiplication of each
$c_i$ by the norm of a non-zero element of $\QQ(\sqrt{a_i})$. 
The twist $\W_\bla^{R_\c}$ is the torsor  $\W_{\c\bla}$,
where $\c\bla=(c_1\la_1,\ldots,c_r\la_r)$.
Thus the set of torsors $Y\times_{U}\W_\bla \to Y$ for all $\bla\in(\QQ^*)^{r}$
is closed under all twists by $\QQ$-torsors of $T$.

For $j=1,\ldots,n$ we denote by $I_j$ the subset of $\{1,\ldots,r\}$ 
such that the fibre of $X_j\to\PP^1_\QQ$ at $e_i$ is singular if and 
only if $i\in I_j$. Let $r_j=|I_j|$. We define 
$\W_{\bla}^{(j)}\subset \AA_\QQ^{2r_j+2}$ to be the variety given by
$$u-e_i v =\lambda_i (x_i^2-a_iy_i^2), \quad i\in I_j,
\quad v\prod_{i=1}^r(u-e_iv)\not=0.$$ 
for $\bla\in(\QQ^*)^{r_j}$.
As proved in \cite{ct-s-87} (Thm.~2.6.4(ii)(a) and
Remarque~2.6.8), there exists a conic $C_j$ over $\QQ$ such that 
$Y_j\times_{U}\W_\bla^{(j)}$ is birationally equivalent
to $C_j\times \W_\bla^{(j)}$. There is a natural morphism
$\W_\bla\to \W_\bla^{(j)}$ that forgets the coordinates $x_i,\,y_i$
for $i\notin I_j$.
This morphism is obviously compatible with the projection to $U$,
hence $Y_j\times_{U}\W_\bla$ is birationally equivalent
to $C_j\times \W_\bla$. Therefore $Y\times_{U} \W_\bla$
is birationally equivalent to $C_1\times\cdots \times C_n\times \W_\bla$.
By Theorem~\ref{t:ut} and the Hasse--Minkowski theorem this variety
satisfies the Hasse principle and weak approximation. 
It now follows from Proposition \ref{p1} that $X(\QQ)$ is dense
in $X(\A_\QQ)^\Br$.
\end{proof}

\medskip

Let us now turn to the arithmetic of pencils of 2-dimensional quadrics.
We start with recalling the relevant definitions from \cite{Sk}. 
Let $k$ be an arbitrary field of odd characteristic.

\begin{definition} \label{d1}
(1) A geometrically integral variety $X$ over $k$ with a morphism
$p:X\to\PP^1_k$ is a {\em quadric bundle} if
every closed point $P\in \PP^1_k$ 
has a Zariski open neighbourhood $U_P\subset \PP^1_k$
such that $p^{-1}(U_P)$ is the closed subset of $U_P\times \PP^3_k$
given by the vanishing of a quadratic form $Q_P(x_1,x_2,x_3,x_4)=0$
with coefficients in the $k$-algebra of regular functions on $U_P$, 
such that $\det(Q_P)$ is not identically zero.

(2) A quadric bundle $X/\PP^1_k$ is {\em admissible} if for
every closed point $P\in \PP^1_k$ for which the fibre 
$X_P$ is singular, $U_P$
and $Q_P$ in (1) can be chosen so that $Q_P(x_1,x_2,x_3,x_4)=
\sum_{i=1}^4 f_i x_i^2$, where $f_i$ is 
invertible outside $P$ with at most a simple zero at $P$, 
and $f_1(P)f_2(P)\not=0$.

(3) Let us call an admissible quadric bundle {\em relatively minimal}
if, in the notation of (2), for each closed point $P\in \PP^1_k$ 
such that $f_3(P)=f_4(P)=0$ the (well-defined) values of the
functions $-f_1/f_2$ and $-f_3/f_4$ at $P$ are both non-squares
in the residue field $k(P)$.
\end{definition}

If $X/\PP^1_k$ is a relatively minimal admissible quadric bundle,
then the closed fibre $X_P$ is not geometrically integral if and only if
$X_P$ is the zero set of a quadratic form of rank 2.
In our notation, $X_P$ is given by $f_1(P)x_1^2+f_2(P)x_2^2=0$.
Thus $X_P$ is the union of two conjugate projective planes defined
over the quadratic extension $k(P)(\sqrt{a_P})$ of the residue field $k(P)$,
where $a_P=-f_1(P)/f_2(P)$. In particular, the (non-trivial)
class of $a_P$ in $k(P)^*/k(P)^{*2}$ is uniquely determined by $X/\PP^1_k$.

The singular locus $(X_P)_{\rm sing}$ of $X_P$ is the projective
line given by $x_1=x_2=0$.
An easy calculation (see \cite[Cor. 2.1]{Sk}) shows that 
the singular locus $X_{\rm sing}$ is contained in the union of singular loci
of the closed fibres of $X/\PP^1_k$ that are not geometrically integral.
Let $b_P\in k(P)^*$ be the value of $-f_3/f_4$ at $P$. By \cite[Prop.~2.2]{Sk},
$X_{\rm sing}\cap X_P$ is the 
subscheme of $(X_P)_{\rm sing}$ given by $x_4^2=b_Px_3^2$.
In particular, the (non-trivial) class of $b_P$ in
$k(P)^*/k(P)^{*2}$ is uniquely determined by $X/\PP^1_k$.

Recall that a scheme over $k$ is called {\em split} if it contains 
a non-empty geometrically integral open subscheme
\cite[Def.~0.1, p. 906]{S96}.
Let us denote by $\tilde X$ the blow-up of $X_{\rm sing}$ in $X$.
In \cite[Prop.~2.4]{Sk} it is shown that $\tilde X$ is a 
smooth projective threefold.
Since $X/\PP^1_k$ is relatively minimal, each fibre 
of $\tilde X/\PP^1_k$ that is not geometrically integral consists
of two irreducible components, none of them geometrically
integral (since $a_P$ and $b_P$ are both non-squares in
$k(P)^*$), cf. \cite[Remark 2.2]{Sk}. Hence a fibre
of $\tilde X/\PP^1_k$ is split if and only if it is geometrically integral.

\begin{proof}[Proof of Theorem \ref{t:1.2}] 
By \cite{Sk} (Prop.~2.1 and its proof, Prop.~2.3) 
there exists a relatively minimal admissible quadric bundle 
$X'/\PP^1_\QQ$ such that the generic fibres of $X/\PP^1_\QQ$
and $X'/\PP^1_\QQ$ are isomorphic. (In particular, 
$X$ and $X'$ are birationally equivalent.) 
If a fibre $X_P$ is geometrically integral, hence split, 
then $\tilde X'_P$ is split too \cite[Cor.~1.2]{S96}.
By the previous paragraph $\tilde X'_P$ 
is then a geometrically integral quadric, hence so is $X'_P$.
It follows that $X'_P$ is geometrically integral whenever $X_P$ is 
geometrically integral.

If all the fibres of $X'/\PP^1_\QQ$ are geometrically integral,
the variety $X$ satisfies the Hasse principle and weak approximation
(see \cite[Thm.~3.10]{CT} or \cite[Thm.~2.1]{S96}). Thus we may assume
that at least one $\QQ$-fibre $X'_P$ of $X'/\PP^1_\QQ$ is given by a quadratic
form of rank 2. (Then almost all $\QQ$-points on the common line 
of the two planes of $X'_P$ are smooth in $X'$, hence $X(\QQ)\not=\emptyset$.)

Let us choose $\AA^1_\QQ\subset\PP^1_\QQ$ so that the fibre of 
$X'/\PP^1_\QQ$ at infinity is smooth, and let $t$ be a coordinate function
on $\AA^1_\QQ$.
By assumption we know that there are $e_1,\ldots,e_r\in\QQ$ 
such that the fibres $X'_{e_1},\ldots,X'_{e_r}$ can be given by
quadratic forms of rank 2, and all the other fibres of $X'/\PP^1_\QQ$
are geometrically integral. Let $a_1,\ldots,a_r\in\QQ^*\setminus\QQ^{*2}$,
defined up to squares,
be such that $\QQ(\sqrt{a_i})$ is the quadratic field over which
the components of $X'_{e_i}$ are defined.

Let $U=\AA^1_\QQ\setminus\{e_1,\ldots,e_r\}$, and let $U_i$ be 
a Zariski open neighbourhood of $e_i$ as in Definition \ref{d1} (2).
The restriction of $X'\to \PP^1_\QQ$ to $U_i$ can be given by
the equation
\begin{equation}
x_1^2-\alpha_i x_2^2+\gamma_i(t-e_i)(x_3^2-\beta_i x_4^2)=0,\label{e1}
\end{equation}
where $\alpha_i,\,\beta_i,\,\gamma_i$ are invertible regular functions 
on $U_i$. We then have $a_i=\alpha_i(e_i)$.

We denote by $S$ a finite set
of places of $\QQ$ containing 2 and the real place. Define $\ZZ_S$ 
as the subring of $\QQ$ consisting of the fractions
with denominators divisible only by primes in $S$.
We choose $S$ large enough so that for all $i=1,\ldots,r$ we have
$$e_i\in \ZZ_S,\quad a_i\in\ZZ_S^*,\quad
e_i-e_j\in \ZZ_S^*\quad \text{for $i\not=j$.}$$
Moreover, by further increasing $S$ we can assume that $X'$ 
has an integral model $\mathcal X'\to\PP^1_{\ZZ_S}$ 
such that for any $p\notin S$ its reduction modulo $p$
is an admissible quadric bundle
$\mathcal X'_{\FF_p}\to \PP^1_{\FF_p}$ with exactly $r$ fibres 
that are quadrics of rank 2 
at the reductions of $e_1,\ldots,e_r$ modulo $p$. For $i=1,\ldots,r$ we
define $\mathcal U_i\subset \PP^1_{\ZZ_S}$ as the complement
to the Zariski closure of $\PP^1_\QQ\setminus U_i$ in $\PP^1_{\ZZ_S}$. 
It is clear that $U_i=\mathcal U_i\times_{\ZZ_S}\QQ$.
By enlarging $S$ we ensure that
$\alpha_i,\,\beta_i,\,\gamma_i$ are invertible regular functions 
on $\mathcal U_i$, and (\ref{e1}) is an equation for 
$\mathcal X'$ over $\mathcal U_i$.

Let $a_0=a_1\cdots a_r$. 
For $\bla\in(\QQ^*)^{r}$ we define the variety $\W_\bla$ as follows:
\begin{equation}
u-e_i v =\lambda_i (x_i^2-a_iy_i^2)\not=0, \quad i=1,\ldots,r,
\quad v=x_0^2-a_0y_0^2\not=0. \label{WW}
\end{equation} 
The morphism $\W_\bla\to U$ that sends the point $(u,v,x_i,y_i)$ 
to the point with the coordinate $t=u/v$, is a torsor of
the following $\QQ$-torus $T$:
$$x_0^2-a_0y_0^2=x_1^2-a_1y_1^2=\cdots=x_r^2-a_r y_r^2\not=0.$$
Let $Y\subset X'$ be the inverse image of $U$.
The fibred product $Y\times_{U}\W_\bla$ is a $Y$-torsor of $T$,
for any $\bla$. As in the proof of Theorem \ref{t:1.1} we see that
the family of torsors $Y\times_{U}\W_\bla \to Y$ 
is closed under all twists by $\QQ$-torsors of $T$. 
By Proposition \ref{p1} it is enough to prove that
the varieties $Y\times_{U}\W_\bla$ satisfy the 
Hasse principle and weak approximation. 

Write $\W=\W_\bla$. Let us enlarge the set $S$ by including into it
the primes where we need to approximate.
We are given a family of $\QQ_p$-points $N_p$, for all primes $p$,
and a real point $N_\infty$, in $Y\times_{U}\W$.
Let $M_p$, $M_\infty$ be the images of these points in $\W$.
By Theorem \ref{t:ut} the variety $\W$ satisfies the Hasse principle
and weak approximation. Indeed, if $a_0\notin\QQ^{*2}$, then Theorem \ref{t:ut}
can be directly applied to $\W$. For $a_0\in\QQ^{*2}$ a change
of variables in the last equation of (\ref{WW})
gives $v=x'_0y'_0$, so that $\W$ is birationally
equivalent to the product of $\AA^1_\QQ$ and the variety (\ref{W}),
to which Theorem \ref{t:ut} can be applied. 

Thus in all cases we can find a point $M\in \W(\QQ)$
arbitrarily close to the points $M_\infty$ and $M_p$ for $p\in S$,
in their respective local topologies. 
Let $P\in U(\QQ)$ be the image
of $M$. We can choose $M$ so that $P$ is contained in a given non-empty
open subset of $\PP^1_\QQ$, for example in the open set 
$U_0\subset U\cap U_1\cap\cdots\cap U_r$ defined by the property
that $Y_P=X'_P$ is a smooth quadric for any $P$ in $U_0$.
Then $Y_P$ can be given by equation (\ref{e1}) for any $i=1,\ldots,r$.
By the implicit function theorem $Y_P$ has $\QQ_p$-points 
close to $N_p$ for $p\in S$ and a 
real point close to $N_\infty$. If we can prove that
$Y_P(\QQ_p)\not=\emptyset$ for all $p\notin S$, then $Y_P$
is everywhere locally soluble over $\QQ$, and hence has a $\QQ$-point
and satisfies weak approximation (by the theorem of Hasse and
the rationality of a smooth quadric with a $\QQ$-point).
This implies that $Y\times_{\PP^1_\QQ}\W$ also
has a $\QQ$-point and satisfies weak approximation.

Let $\W_0$ be the inverse image of $U_0$ in $\W$. To finish the proof
it is enough to show that the natural projection
$(Y\times_{U}\W_0)(\QQ_p)\to \W_0(\QQ_p)$
is surjective for all $p\notin S$.

We can assume that a point in $\W_0(\QQ_p)$ has coordinates
$(x_0,y_0,\ldots,x_r,y_r)\in \ZZ_p^{2r+2}$, not all divisible by $p$. 
It maps to $P=(u:v)\in U_0(\QQ_p)$, where $u,\,v\in\ZZ_p$, and
$t=u/v\in \QQ_p$ is such that $t\not=e_i$, for any $i=1,\ldots,r$.
Let us denote by $x\mapsto\bar x$ the map $\QQ_p\to\FF_p\cup\{\infty\}$
such that $\bar x\equiv x\bmod p$ if $x\in \ZZ_p$, and 
$\bar x=\infty$ if $x\in \QQ_p\setminus\ZZ_p$.
We have three possible cases:
\begin{itemize}
\item[(a)] $\bar t$ is not equal to any of the points $\bar e_i$, 
for $i=1,\ldots,r$;
\item[(b)] 
$\bar t=\bar e_i$ for some $i\in \{1,\ldots,r\}$
and $\val_p(v)$ is even;
\item[(c)] $\bar t=\bar e_i$ for some $i\in \{1,\ldots,r\}$ and 
$\val_p(v)$ is odd.
\end{itemize}
In case (a) the quadric $Y_P$ 
reduces to a geometrically integral quadric over $\FF_p$.  
Such a quadric has smooth $\FF_p$-points and any smooth $\FF_p$-point
lifts to a $\QQ_p$-point on $Y_P$ by Hensel's lemma.

Now suppose that we are in case (b) or case (c). 
Then the reduction of $Y_P$ 
modulo $p$ is the same as that of $Y_{e_i}$. 
If $a_i$ is a square modulo $p$,
the reduction of $Y_P$ modulo $p$ is a union of two projective
planes defined over $\FF_p$. Any $\FF_p$-point not on the common
line of the two planes is smooth and hence
lifts to a $\QQ_p$-point in $Y_P$ by Hensel's lemma. Now assume that
$a_i$ is not a square modulo $p$. Since $P=(t:1)\in U_i(\QQ)$
we can evaluate (\ref{e1}) at $P$ and obtain an equation for $Y_P=X'_P$.
From (\ref{WW}) we see that $\val_p(u-e_iv)$ must be even. 

In case (b) we deduce that $\val_p(t-e_i)$ is also even. 
But then $Y_P$ can be given by a quadratic form over $\ZZ_p$ that reduces
to a rank 4 quadratic form over $\FF_p$. This implies that
$Y_P$ has a $\QQ_p$-point. 

The case (c) is not compatible with the condition that $a_i$ 
is not a square modulo $p$. 
Indeed, if $\val_p(v)$ is odd, then $\val_p(t-e_i)>0$
is also odd. Take any $j\in \{1,\ldots,r\}$ with $j\not=i$. 
Since $e_i-e_j\in\ZZ_S^*$ we see that $t-e_j\in\ZZ_S^*$, so that
$u-e_j v$ has odd valuation. This implies that $a_j$
is a square modulo $p$. Since $v=x_0^2-a_0y_0^2$ has odd valuation,
$a_0$ must also be a square modulo $p$. This is a contradiction
with the fact that $a_0\cdots a_r$ is a square.
This finishes the proof of the theorem.
\end{proof}

\section{Higher-dimensional varieties}
\label{s:FM}

The purpose of this section is to establish Theorem \ref{fib}.
By \cite[Prop.~3.9]{CT} it suffices to assume that 
$n=3$ or $n=4$. On multiplying  \eqref{e3} and each of 
the variables $X_i$ by an appropriate non-zero rational
function in $\t=(t_1,\ldots,t_m)$, it suffices to replace \eqref{e3}
by a $\QQ$-birationally equivalent variety which is given by 
an equation of the same form satisfying the following additional conditions.
There exist pairwise non-proportional polynomials 
$l_1,\ldots,l_r\in \QQ[\t]$ of total degree $1$, 
such that for $j=1,\ldots,n$ we can write
$f_j=c_j\prod_{i\in I_j}l_i$ where
$c_j\in \QQ^*$ and $I_j\subseteq \{1,\ldots,r\}$. Moreover,
for $n=3$ (resp.\ $n=4$) each $l_i$ 
divides exactly one of $f_1,f_2,f_3$ (resp.\
one or two of $f_1,f_2,f_3, f_4$).
Finally, we may assume that 
$$l_i(1,0,\ldots,0)=1, \quad i=1,\ldots,r. $$
Indeed, there exists a non-zero vector $\a\in \QQ^m$ 
such that $l_i(\a)\neq 0$ for $i=1,\ldots,r$.
Assuming without loss of generality that $a_1\neq 0$, 
one achieves the claim by making the change of variables 
$t_1=a_1t_1'$ and $t_i=t_i'+a_it_1'$ for $2\leq i\leq m$,
and then replacing $c_j$ by $c_j\prod_{i\in I_j}l_i(\a)$. 
The case when \eqref{e3} is a quadric over $\QQ$ being a
subject of the Hasse--Minkowski theorem,
we can assume without loss of generality that $f_1$ is not
constant and is divisible by $l_1(\t)$.

Let us denote the variety in \eqref{e3} by $V$.
The map $p:V\to\AA_\QQ^{m-1}$ sending
$(X_1,\ldots,X_n,\t)$ to $(t_2,\ldots,t_m)$ is a surjective morphism. 
The fibre $V_\b=p^{-1}(\b)$ above 
a point $\b=(b_2,\ldots,b_m)$ of $\AA_\QQ^{m-1}$ is given by
the following equation with coefficients in the residue field $\QQ(\b)$:
$$\sum_{j=1}^n\tilde f_j(t)X_j^2=0,$$
where $\tilde f_j(t)=f_j(t,\b)$. 
We note that the morphism $p$ has a section $s$ that sends 
$(t_2,\ldots,t_m)$ to the
point of $V$ with coordinates $X_1=1$, $X_2=\ldots=X_n=0$,
$t_1=-l_1(0,t_2,\ldots,t_m)$.

Theorem \ref{fib} will follow from a variant of the fibration method
with a section, which is a result of Harari \cite[Thm. 4.3.1]{harari},
once we check that :

\begin{enumerate}\item the generic fibre $V_\eta$ of $p$ is 
geometrically integral and geometrically rational, and the section $s$
defines a smooth point of $V_\eta$;
\item there is a non-empty open subset $U\subset \AA_\QQ^{m-1}$ such that
for any $\b\in U(\QQ)$
the Brauer--Manin obstruction is the only obstruction
to weak approximation on smooth and proper models of $V_\b$.
\end{enumerate}

Let $U\subset \AA^{m-1}_\QQ$ be the open subset given
by $l_{i_1}(0,\b)\not=l_{i_2}(0,\b)$ for all $i_1\not=i_2$. 
This set is not empty since no two polynomials $l_{i_1}$ and $l_{i_2}$
are equal for $i_1\not=i_2$.
The restriction of $p$ to $U$ has geometrically integral fibres, as follows
from our assumption that if $n=3$ (resp.\ $n=4$) then each $l_i$ 
divides exactly one of $f_1,f_2,f_3$ (resp.\ one or two of 
$f_1,f_2,f_3, f_4$). 
Thus for any $\b$ in $U$ the fibre $V_\b$
is a conic bundle or an admissible quadric bundle
(see Definition \ref{d1} (2) above).
In particular, this is true for the generic fibre $V_\eta$.
The conic bundles are smooth, so for $n=3$ 
the point of $V_\eta$ defined by $s$ is certainly smooth.
In the case $n=4$ an easy calculation (cf.\ the remarks
after Definition \ref{d1}) shows that at every point of 
the singular locus 
$(V_\eta)_{\rm sing}$ exactly two of the coordinates $X_1,
X_2,X_3,X_4$ must vanish. Hence the point of $V_\eta$ defined by $s$ is also
smooth in this case. Thus condition (1) is satisfied.
Condition (2) follows from Theorems~\ref{t:1} and \ref{t:1.2}, so
the proof of Theorem \ref{fib} is now complete.

\section{Rational points on some del Pezzo surfaces of degrees $1$ and $2$}\label{s:DP}

Theorem \ref{t:1} can be used to study weak approximation 
for suitable del Pezzo surfaces of low degree, where our knowledge is 
still largely conditional. In this section we construct families of del Pezzo surfaces 
of degree $1$ and $2$ for which the failure of weak approximation
is controlled by the Brauer--Manin obstruction. 
Recall that a smooth and projective surface $V$ is called
{\it minimal} if any birational morphism $V\to V'$, where
$V'$ is also smooth and projective, is an isomorphism.
The surfaces that we construct will be
minimal, so our results do not follow from earlier results 
for del Pezzo surfaces of higher degree.

We start with describing del Pezzo surfaces to which
Theorem \ref{t:1} can be applied, in terms of orbits
of the Galois group action on the set of exceptional curves. 

Let $\Gamma_d$ be the graph whose
vertices are the exceptional curves on a del Pezzo surface of degree $d$
defined over an algebraically closed field; two vertices are
connected by $n$ edges if the intersection index of the corresponding
curves is $n$. A del Pezzo surface $X$ of degree $d$ defined over $\QQ$
induces an action of the Galois group 
$G={\rm Gal}(\bar \QQ/\QQ)$ on $\Gamma_d$ realised as the graph of 
exceptional curves on $\bar X=X\times_{\QQ}\bar \QQ$. 

Let $\Gamma(1)$ be the graph with two vertices joined by a single
edge. For a positive integer $r$ we denote by $\Gamma(r)$ 
the disconnected union of $r$ copies of $\Gamma(1)$. Recall that a
subgraph $\Gamma'$ of a graph $\Gamma$ is {\em induced} if the
vertices of $\Gamma'$ are connected by exactly the same edges as in $\Gamma$.

\begin{proposition} \label{p}
Consider the family of del Pezzo surfaces of degree $d\leq 7$ 
over $\QQ$ for which $\Gamma_d$ has an induced subgraph 
$\Gamma(8-d)$ such that all the connected components of $\Gamma(8-d)$ are 
$G$-invariant. All surfaces in this family have the property 
that the Brauer--Manin 
obstruction is the only obstruction to weak approximation.
Moreover, if $d\in\{1,2,4\}$ then the surfaces for which 
no vertex of $\Gamma(8-d)$ is fixed by $G$ are minimal over $\QQ$.
\end{proposition}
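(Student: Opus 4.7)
The plan has two parts. First, I would show that any induced subgraph $\Gamma(8-d) \subset \Gamma_d$ with $G$-invariant components yields a $\QQ$-rational conic bundle $\pi\colon X \to \PP^1_\QQ$ all of whose degenerate fibres are defined over $\QQ$, reducing the Brauer--Manin claim to Theorem \ref{t:1}. Second, for $d \in \{1, 2, 4\}$ I would verify minimality by ruling out $G$-orbits of pairwise disjoint exceptional curves on $\bar X$.

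For the first part, I would set $L_i = E_i + E_i'$ for each of the $8-d$ pairs $(E_i, E_i')$ forming the components of $\Gamma(8-d)$. Intersection computations on the del Pezzo give $L_i^2 = 0$, $L_i \cdot L_j = 0$ for $i \neq j$ (since distinct components of an induced subgraph are pairwise disjoint), and $L_i \cdot (-K_X) = 2$. The Hodge index theorem precludes a two-dimensional totally isotropic subspace of $\Pic(\bar X) \otimes \RR$, so $L_i$ and $L_j$ are proportional for $i \neq j$; since they share the same $(-K_X)$-degree they coincide. Hence there is a common class $L$ whose linear system defines a conic bundle $\bar\pi\colon \bar X \to \bar\PP^1$ having the $8-d$ pairs as its degenerate fibres. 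The $G$-invariance of each component $\{E_i, E_i'\}$ as a set makes $L$ itself $G$-invariant, descending $\bar\pi$ to a conic bundle $X \to \PP^1_\QQ$; the same invariance places each degenerate fibre above a $\QQ$-rational point of $\PP^1_\QQ$. Theorem \ref{t:1} now applies, noting that $X(\QQ) \neq \emptyset$ because the intersection point $E_i \cap E_i'$ is $G$-fixed.

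For the second part, a smooth projective geometrically rational $\QQ$-surface is minimal if and only if $\bar X$ admits no $G$-orbit of pairwise disjoint exceptional curves. The hypothesis that no vertex of $\Gamma(8-d)$ is $G$-fixed forces the two curves of each pair $\{E_i, E_i'\}$ to form a single $G$-orbit of size $2$; since they meet in a point, this orbit is not disjoint. Therefore every candidate disjoint $G$-orbit lies in the complement $\Gamma_d \setminus \Gamma(8-d)$. The main obstacle is to show that no such orbit exists when $d \in \{1, 2, 4\}$: I would carry this out by case-by-case inspection of $\Gamma_d$ and its automorphism group (respectively the Weyl groups $W(D_5)$, $W(E_7)$, $W(E_8)$ for $d = 4, 2, 1$), arguing that the pair-swapping Galois elements forced by the hypothesis propagate through $\Gamma_d$ so that every $G$-orbit of exceptional curves in the complement contains two curves meeting in a point.
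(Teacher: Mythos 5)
Your first part is essentially the paper's argument. The paper takes one pair, sets $C=E_i+E_i'$, notes $(C,C)=0$ so that $C$ is a reducible fibre of a conic bundle $\pi:X\to\PP^1_\QQ$, and observes that the other pairs, being orthogonal to $C$, are also fibre components; your Hodge-index argument showing all the classes $L_i$ coincide is an equivalent way of reaching the same conclusion, and the descent of $\pi$ to $\QQ$ and of the degenerate fibres to $\QQ$-points from the $G$-invariance of each pair is exactly as in the paper. One small point you should make explicit: Theorem \ref{t:1} needs \emph{all} degenerate fibres of $\pi$ to be defined over $\QQ$, and this follows because a conic bundle structure on a del Pezzo surface of degree $d$ has exactly $8-d$ degenerate geometric fibres (from $K_X^2=8-r$), so the $8-d$ pairs you have exhibited exhaust them.

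The second part has a genuine gap. Your reduction is correct up to the point where you note that the hypothesis forces each pair $\{E_i,E_i'\}$ to be a single $G$-orbit of two intersecting curves, i.e.\ that the conic bundle $\pi$ is relatively minimal. But the remaining claim --- that relative minimality of the conic bundle structure forces minimality of the surface when $d\in\{1,2,4\}$, equivalently that no $G$-orbit of pairwise disjoint exceptional curves can exist among the sections and multisections of $\pi$ --- is precisely a theorem of Iskovskikh (\emph{Minimal models of rational surfaces over arbitrary fields}, Izv.\ Akad.\ Nauk SSSR 43 (1979), Thm.~4), which is what the paper invokes. This is not a routine verification: for $d=1$ the graph $\Gamma_1$ has $240$ vertices and the complement of $\Gamma(7)$ has $226$ of them, and the assertion that ``the pair-swapping Galois elements propagate through $\Gamma_d$'' so as to kill every disjoint orbit is exactly the content to be proved; you give no argument for it, only a plan. (Note also that the statement is genuinely sensitive to the degree: it fails for $d=3$, for instance, where a relatively minimal conic bundle of degree $3$ is a cubic surface with a line, hence never minimal.) As written, the minimality claim is therefore asserted rather than established; you should either carry out the combinatorial argument in each Weyl group or, as the paper does, cite Iskovskikh's theorem.
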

\begin{proof} Pick a connected component of $\Gamma(8-d)$, and let 
$C\in \Pic(\bar X)$ be the class of
the sum of corresponding exceptional curves. 
We have $(C,C)=0$, and this implies that $C$ is the class of 
a geometrically reducible fibre of a conic bundle
morphism $\pi:X\to \PP^1_\QQ$.
The curves orthogonal to $C$ under 
the intersection pairing are components of the fibres of $\pi$.
Thus the unions of exceptional
curves corresponding to the connected components of $\Gamma(8-d)$
give rise to $8-d$ degenerate fibres of $\pi$, which are all defined
over $\QQ$. A del Pezzo surface of degree $d$ which is a conic bundle 
has exactly $8-d$ degenerate fibres. Thus all the degenerate 
fibres of $\pi:X\to \PP^1_\QQ$ are defined over $\QQ$ and Theorem
\ref{t:1} applies to $X$. 

A conic bundle surface 
is called {\em relatively} minimal if all the fibres of 
the conic fibration are integral. 
For the surfaces considered in Theorem \ref{t:1}
this means that no component of a degenerate fibre is defined over $\QQ$,
or equivalently, no vertex of $\Gamma(8-d)$ is fixed by $G$.
By a theorem of Iskovskikh \cite[Thm.~4]{isk}, if a del Pezzo
surface of degree 1, 2 or 4 is a relatively minimal conic bundle,
then it is a minimal surface.
\end{proof}

Let $f, g, h\in \QQ[t]$ be polynomials such that 
$f(t)g(t)h(t)=c\prod_{i=1}^r(t-e_i)$, for 
$c\in \QQ^*$ and pairwise different
$e_1,\ldots,e_{r}\in \QQ$. 
Assume that $\ell=\deg f$, $m=\deg g$, $n=\deg h$
are integers of the same parity such that  $\ell\leq m\leq n$.
Consider the smooth surface in 
$\PP_\QQ^2\times \AA_\QQ^1$ defined by
\begin{equation}\label{eq:1}
f(t)x^2+g(t)y^2+h(t)z^2=0,
\end{equation}
where $t$ is a coordinate function on $\AA_\QQ^1$.
We embed $\AA_\QQ^1$ into $\PP_\QQ^1$ as the complement to the point $\infty$.
We may also take $\AA_\QQ^1\subset\PP_\QQ^1$ to be the
complement to the point $t=0$, with the coordinate function $T=1/t$.
Let $F(T)=T^{\ell} f(1/T)$, $G(T)=T^{m} g(1/T)$, $H(T)=T^{n} h(1/T)$,
and consider the smooth surface in $\PP_\QQ^2\times \AA_\QQ^1$
given by
\begin{equation}\label{eq:2}
F(T)X^2+G(T)Y^2+H(T)Z^2=0.
\end{equation}
Let $\pi:V\rightarrow \PP_\QQ^1$ be the conic bundle obtained by
gluing the surface (\ref{eq:1}) with the surface (\ref{eq:2}).
For this we
identify the restrictions of the two fibrations to
$\PP^1_\QQ\setminus\{0,\infty\}$ by means of the isomorphism
$t=T^{-1}$, $x=T^{\ell_1}X$, $y=T^{m_1}Y$, $z=T^{n_1}Z$,
where $(\ell,m,n)=2(\ell_1,m_1,n_1)$ or 
$(\ell,m,n)+(1,1,1)=2(\ell_1,m_1,n_1)$.
Since $F(0)G(0)H(0)\not=0$,
the fibre of $\pi$ at $t=\infty$ is smooth, so
$\pi$ has precisely $r=\ell+m+n$ degenerate  fibres.

Suppose $r=5$, with $(\ell,m,n)=(1,1,3)$. Setting $z=1$ in \eqref{eq:1}
and passing to homogeneous coordinates
we obtain a smooth cubic surface in $\PP^3_\QQ$ with the equation
$$c_1(u-e_1v)x^2+c_2(u-e_2v)y^2+c_3(u-e_3v)(u-e_4v)(u-e_5v)=0.$$
It contains the line $u=v=0$. If
the conic bundle is relatively minimal, then, contracting this line,
we obtain a minimal del Pezzo surface of degree 4 with a $\QQ$-point
by \cite[Prop.~2.1]{I2}.

Suppose next that $r=6$, with $(\ell,m,n)=(2,2,2)$. 

\begin{proposition}
Let $f(t)=a(t-e_1)(t-e_2)$, $g(t)=b(t-e_3)(t-e_4)$, $h(t)=c(t-e_5)(t-e_6)$,
where $e_1,\ldots,e_6\in\QQ$ are pairwise different, and 
$a,b,c\in\QQ^*$. If $f(t)$, $g(t)$ and $h(t)$
are linearly independent over $\QQ$, then 
$V$ is a del Pezzo surface of degree $2$ for which the Brauer--Manin 
obstruction is the only obstruction to weak approximation.
If, moreover, the classes
$$-1,\ a,\ b,\ c,\ e_i-e_j \ \text{for} \ 1\leq i<j\leq 6,$$ 
are linearly independent in the 
$\FF_2$-vector space $\QQ^*/\QQ^{*2}$, then $V$ is minimal. 
\end{proposition}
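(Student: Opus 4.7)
The plan is first to identify $V$ with a smooth divisor of bidegree $(2,2)$ in $\PP_\QQ^2\times\PP_\QQ^1$. Since $(\ell,m,n)=(2,2,2)$, the gluing prescription from earlier in this section reduces to the identity on projective coordinates of $\PP^2$, so $V$ is cut out by
\begin{equation*}
F(u,v)x^2+G(u,v)y^2+H(u,v)z^2=0,
\end{equation*}
where $F,G,H$ are the homogenisations of $f,g,h$. Let $H_1,H_2$ denote the hyperplane classes pulled back from the two factors. Adjunction on $\PP^2\times\PP^1$, whose canonical class is $-3H_1-2H_2$, yields $K_V=-H_1|_V$ and $(-K_V)^2=H_1^2\cdot(2H_1+2H_2)=2$. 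To upgrade $V$ from a weak del Pezzo surface to a genuine del Pezzo surface of degree $2$, I would argue that $-K_V=H_1|_V$ is ample. Now $H_1$ is nef on the ambient variety and contracts exactly the fibres $\{p\}\times\PP^1$ of the first projection, so ampleness of $H_1|_V$ fails precisely when $V$ contains such a fibre. The inclusion $\{(x_0:y_0:z_0)\}\times\PP^1\subset V$ is equivalent to $x_0^2F+y_0^2G+z_0^2H\equiv 0$ as a binary form in $(u,v)$, which is excluded by the $\QQ$-linear independence of $f,g,h$.

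With $V$ identified as a smooth del Pezzo surface of degree $2$, the projection $\pi:V\to\PP_\QQ^1$ becomes a conic bundle with exactly $r=6$ degenerate fibres over $e_1,\ldots,e_6$, all defined over $\QQ$ by construction. Theorem~\ref{t:1} therefore delivers the first assertion of the proposition directly.

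For the minimality statement under the additional hypothesis, by Iskovskikh's theorem as used in the proof of Proposition~\ref{p}, it is enough to verify that the conic bundle $\pi$ is relatively minimal, i.e.\ that each splitting class $a_i\in\QQ^*/\QQ^{*2}$ of a degenerate fibre is non-trivial. Reading from the fibre equation $g(e_1)y^2+h(e_1)z^2=0$ gives
\begin{equation*}
a_1\equiv -g(e_1)h(e_1)=-bc\,(e_1-e_3)(e_1-e_4)(e_1-e_5)(e_1-e_6)\pmod{\QQ^{*2}},
\end{equation*}
and analogously for $a_2,\ldots,a_6$. In the $\FF_2$-vector space $\QQ^*/\QQ^{*2}$, each $a_i$ is then a sum drawn from the list $[-1],[a],[b],[c],[e_j-e_k]$ with $j<k$, comprising two of the classes $[a],[b],[c]$ and four classes of the form $[e_j-e_k]$ (plus possibly $[-1]$). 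Since these are assumed linearly independent, each $a_i$ is non-zero, so relative minimality and hence minimality of $V$ both follow.

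The main obstacle is the ampleness step in the first paragraph: the intersection-theoretic computation of $(-K_V)^2=2$ is routine, but one must correctly match the algebraic hypothesis of $\QQ$-linear independence of $f,g,h$ with the geometric condition that no fibre $\{p\}\times\PP^1$ lies on $V$ (equivalently, the absence of $(-2)$-curves), which is what distinguishes a true del Pezzo from a merely weak del Pezzo surface of degree $2$.
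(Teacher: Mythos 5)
Your proof is correct, and it diverges from the paper's only in how you certify that $V$ is a del Pezzo surface of degree $2$. The paper uses the same morphism you do (the projection $\varphi:V\to\PP^2_\QQ$, i.e.\ your $H_1|_V$), observes that linear independence of $f,g,h$ makes $\varphi$ finite, and then exhibits $V$ explicitly as a double cover of $\PP^2_\QQ$ branched in the smooth quartic $(f_1x^2+g_1y^2+h_1z^2)^2=4(f_0x^2+g_0y^2+h_0z^2)(f_2x^2+g_2y^2+h_2z^2)$, invoking the standard fact that such a double cover is a del Pezzo surface of degree $2$. You instead work with the bidegree $(2,2)$ model in $\PP^2_\QQ\times\PP^1_\QQ$, compute $K_V=-H_1|_V$ and $(-K_V)^2=2$ by adjunction, and check ampleness via Nakai--Moishezon; the only curves with $H_1\cdot C=0$ would be fibres $\{p\}\times\PP^1$ contained in $V$, which is exactly the relation $x_0^2F+y_0^2G+z_0^2H\equiv 0$ excluded by linear independence (note that linear independence over $\QQ$ persists over $\bar\QQ$, as it is a rank condition, so the exclusion holds for all geometric points $p$). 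Both arguments turn on the identical use of the hypothesis; yours is self-contained and makes the ``no contracted curve'' dichotomy explicit, while the paper's yields the branch quartic explicitly. The remaining steps coincide: the first assertion via Theorem~\ref{t:1} applied to the conic bundle with six degenerate $\QQ$-fibres, and minimality via relative minimality plus Iskovskikh's theorem as in Proposition~\ref{p}. Here you actually supply more detail than the paper, which does not write out the splitting classes $a_i\equiv -g(e_i)h(e_i)$ etc.; your observation that each $[a_i]$ involves exactly two of $[a],[b],[c]$ (so cannot vanish under the independence hypothesis, whatever happens with the $[-1]$ contributions from sign flips $e_j-e_k\mapsto e_k-e_j$) is the right way to close that step.
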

\begin{proof}
Let us write $f(t)=f_2t^2+f_1t+f_0$, with $f_0,f_1,f_2\in \QQ$,
and similarly for $g(t)$ and $h(t)$.
Using the equality of degrees of $f(t)$, $g(t)$ and $h(t)$ 
one checks immediately that
the projection to the first factor $\PP_\QQ^2\times \AA_\QQ^1
\to \PP_\QQ^2$ gives rise to a morphism $\varphi:V\to \PP^2_\QQ$. Linear
independence of $f(t)$, $g(t)$ and $h(t)$ implies that 
$\varphi$ has finite fibres, and so $V$ is a double covering of $\PP^2_\QQ$
ramified in the quartic curve
$$(f_1x^2+g_1y^2+h_1z^2)^2=4(f_0x^2+g_0y^2+h_0z^2)(f_2x^2+g_2y^2+h_2z^2).$$
This curve is smooth because $V$ is smooth. A double covering of $\PP^2_\QQ$
ramified in a smooth quartic is a del Pezzo surface of degree 2.
The first statement now follows from Theorem~\ref{t:1}. 
The final condition in the proposition implies that the conic bundle 
$\pi:V\rightarrow \PP_\QQ^1$
is relatively minimal, whence $V$  is minimal by Proposition \ref{p}.
\end{proof}

The case $r=7$ translates as $K_V^2=1$. This
arises if and only if $(\ell,m,n)=(1,1,5)$ or $(\ell,m,n)=(1,3,3)$.
We claim that 
neither of these surfaces can be isomorphic to a del Pezzo of degree 1. 
To see this we recall that del Pezzo surfaces are defined by 
the property that their anticanonical divisor is ample. 
It therefore suffices to find a geometrically integral curve $C$ on $V$ 
for which $(C,-K_V)\leq 0$.  To do so we adapt an argument of 
Iskovskikh  \cite[Prop.~1.3 and Cor.~1.4]{I2}.
In the case $(\ell,m,n)=(1,1,5)$ consider the curve $C$ which is 
the Zariski closure in $V$ of the closed subset of \eqref{eq:1} 
given by $z = 0$. We claim that this is a smooth curve of genus 
$0$ such that $(C, - K_V ) = -1$.
To see this we note that 
$C$ is a smooth curve of genus 0 such that $(C,F)=2$,
where $F\in \Pic(V)$ is the class of a fibre. 
The divisor of the rational function $z/x$ on $V$ is $C+2F_\infty-C'$,
where $F_\infty$ is the fibre at infinity and $C'$
is the Zariski closure in $V$ of the closed subset of \eqref{eq:1}
given by $x=0$. Since $(C,C')=1$ we see that $(C^2)=-3$, and then
from the adjunction formula we find that $(C,-K_V)=-1$, as claimed.
In the case $(\ell,m,n)=(1,3,3)$ we consider the pencil of genus 1 curves
$E=E_{(\lambda:\mu)}$ 
cut out by $\lambda y+\mu z=0$ on $V$. 
It is easy to see
that $(E,E)=1$, and hence adjunction gives $(E,-K_V)=1$.
It follows that $E=-K_V$. This pencil contains two reducible members, 
each consisting of the union of one component of the degenerate 
fibre at $f(t)=0$ and a residual rational curve $C$. It follows
that $(C,-K_V)=0$.

We can use some special conic bundles with {\em eight} degenerate fibres
to construct del Pezzo surfaces of degree 1 to which
Theorem \ref{t:1} can be applied. 
Note that $r=8$ gives $K_V^2=0$.
Let $e_1,\ldots,e_8\in \QQ$ be pairwise distinct, and let
$c_1,\,c_2\in\QQ^*$. 
Let $\pi:V\to \PP^1_\QQ$ be the conic bundle constructed as above
from the surface given by the equation
\begin{equation}
x^2=c_1^2\prod_{i=1}^4\frac{t-e_i}{e_8-e_i}y^2+c_2^2\prod_{j=5}^8(t-e_j)z^2
\label{eee}
\end{equation}
in $\PP^2_\QQ\times\AA^1_\QQ$. This conic bundle is not
relatively minimal because the fibre at $t=e_8$ is a union of
two components defined over $\QQ$. Each of them
can be smoothly contracted, thus 
producing a conic bundle surface $W\to \PP^1_\QQ$ 
with seven degenerate fibres. 

Recall that the discriminant of the quartic polynomial
$p(t)=\sum_{i=0}^4p_it^i$ is a homogeneous form 
$D_4(p_0,\ldots,p_4)$ of degree 6.
Thus $D_4=0$ defines a hypersurface $Z\subset \PP^4_\QQ$ of degree 6.
The space of projective lines in $\PP^4_\QQ$ is naturally identified 
with the Grassmannian $Gr(2,5)$.
The open subset of $Gr(2,5)$ parameterising those lines which meet $Z$
in six distinct complex points is non-empty. 
Let $\AA^5_\QQ$ be the space of 
polynomials of degree at most 4. 
Joining two points by a line gives a dominant 
rational map from $\AA^5_\QQ\times \AA^5_\QQ$ to $Gr(2,5)$. 
It follows that the open subset of $\AA^5_\QQ\times \AA^5_\QQ$
consisting of pairs of polynomials $(p(t),q(t))$ 
such that the discriminant of 
$rp(t)+sq(t)$ 
vanishes for exactly six points
$(r:s)\in \PP^1_\CC(\CC)$, is non-empty. 
These six points of $Z$ are necessarily smooth in $Z$, 
and hence for each of them $rp(t)+sq(t)$ has exactly one double root.
We conclude that there is a non-zero
polynomial $f(p_0,\ldots,p_4,q_0,\ldots,q_4)$ 
with coefficients in $\QQ$ such that if $f(p_0,\ldots,p_4,q_0,\ldots,q_4)\not=0$,
then $rp(t)+sq(t)$ has multiple roots for exactly six values of 
$(r:s)\in \PP^1_\CC(\CC)$, and for each of these values 
$rp(t)+sq(t)$ has exactly one double root.
Writing the coefficients as symmetric functions
of the roots, and applying this to the polynomials
$$p(t)=c_1^2\prod_{i=1}^4\frac{t-e_i}{e_8-e_i}
\quad\text{and}\quad q(t)=c_2^2\prod_{j=5}^8(t-e_j)$$
we obtain a non-zero polynomial $F(e_1,\ldots,e_8,c_1,c_2)$ 
with coefficients in $\QQ$.

\begin{proposition} 
If $e_1,\ldots,e_8\in \QQ$ and $c_1, c_2\in \QQ^*$ 
satisfy 
$F(e_1,\ldots,e_8,c_1,c_2)\not=0$, 
then $W$ is a del Pezzo surface of degree $1$
over $\QQ$ for which the Brauer--Manin obstruction is the only
obstruction to weak approximation.
If, moreover, the classes of  
$e_i-e_j$, where $1\leq i\leq 4$ and $5\leq j\leq 8$,
are linearly independent in the 
$\FF_2$-vector space $\QQ^*/\QQ^{*2}$, then $W$ is minimal.
\end{proposition}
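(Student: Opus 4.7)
The plan is to combine Theorem~\ref{t:1} applied to $W$ with an analysis of an auxiliary elliptic fibration on $V$. For the Brauer--Manin assertion, I would invoke Theorem~\ref{t:1} directly on the conic bundle $W\to\PP^1_\QQ$: its seven degenerate fibres lie over the $\QQ$-rational points $t=e_1,\ldots,e_7$, so the hypotheses are satisfied and the conclusion is immediate.

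The substantive task is to verify that $W$ is a del Pezzo of degree $1$ under the genericity condition $F\neq 0$. The equality $K_W^2=1$ follows from $K_V^2=8-8=0$ and the blow-down $\sigma\colon V\to W$. To see that $-K_W$ is ample, I would exhibit $V$ as a rational elliptic surface with a section. Writing $p(t)=c_1^2\prod_{i=1}^4(t-e_i)/(e_8-e_i)$ and $q(t)=c_2^2\prod_{j=5}^8(t-e_j)$, the projection $\varphi\colon V\to\PP^1_{(y:z)}$, $(x:y:z,t)\mapsto(y:z)$, is a well-defined morphism (since $(1:0:0)$ does not satisfy \eqref{eee}) whose fibre above $(\lambda:\mu)$ is the genus-$1$ double cover $x^2=\lambda^2p(t)+\mu^2q(t)$ of $\PP^1_t$. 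The contracted line $L=\{x=c_1y,\,t=e_8\}$ maps isomorphically onto $\PP^1_{(y:z)}$, so it is a section of $\varphi$ and $W$ is the blow-down of this section. A standard intersection computation (using $K_V=\sigma^*K_W+L$ and $-K_V=F_\varphi$, the class of a fibre of $\varphi$) shows that an integral curve $C\subset W$ with $(C,-K_W)=0$ would have strict transform $\tilde C$ contained in a fibre of $\varphi$ and disjoint from $L$; since $L$ meets every fibre, this is impossible as soon as every fibre of $\varphi$ is irreducible.

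It thus remains to show that the polynomial $F$ is designed to force irreducibility of every fibre of $\varphi$. A fibre is singular exactly when $\lambda^2p(t)+\mu^2q(t)$ has a multiple root in $t$, equivalently when $(r:s)=(\lambda^2:\mu^2)$ is a zero of the discriminant of $rp+sq$ as a polynomial in $t$. The condition $F(e_1,\ldots,e_8,c_1,c_2)\neq 0$ was engineered so that this occurs at exactly six values of $(r:s)$, each yielding a single double root of $rp+sq$. Pulling back by the degree-$2$ cover $(\lambda:\mu)\mapsto(\lambda^2:\mu^2)$, whose branch points $(1:0)$ and $(0:1)$ themselves correspond to the smooth fibres $x^2=p(t)$ and $x^2=q(t)$, one obtains twelve singular fibres on $V$, each of Kodaira type $I_1$ and therefore irreducible; hence $-K_W$ is ample. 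Finally, for the minimality claim, the linear-independence hypothesis forces $\prod_{j=5}^8(e_i-e_j)$ and $\prod_{i=1}^4(e_j-e_i)/(e_8-e_i)$ to be non-squares in $\QQ^*$ for $1\leq i\leq 4$ and $5\leq j\leq 7$ respectively, so that no component of any of the seven degenerate fibres of $W\to\PP^1_\QQ$ is defined over $\QQ$; the conic bundle is therefore relatively minimal, and the second part of Proposition~\ref{p} completes the argument. The main obstacle is the translation from the genericity condition $F\neq 0$ to the irreducibility of every fibre of $\varphi$, which is what powers the intersection-theoretic exclusion of $(-2)$-curves on $W$.
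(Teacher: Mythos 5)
Your argument is correct and follows essentially the same route as the paper: the pencil of curves $\lambda y+\mu z=0$ (your projection to $(y:z)$) makes $V$ a rational elliptic surface whose twelve singular fibres are irreducible nodal curves precisely because $F\neq 0$, so $-K_V$ is the fibre class, $L$ is a section, and Nakai--Moishezon applied after contracting $L$ gives the del Pezzo property, with Theorem~\ref{t:1} and Proposition~\ref{p} supplying the Brauer--Manin and minimality statements. The only differences are cosmetic (you get $K_W^2=1$ from $K_V^2=0$ and the blow-down, while the paper intersects two anticanonical curves through $\sigma(L)$, and you spell out the inequality $(C,-K_W)=(\tilde C,-K_V)+m>0$ that the paper asserts).
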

\begin{proof} For $(\lambda:\mu)\in
\PP^1_\QQ(\QQ)$ let $E_{(\lambda:\mu)}\subset V$ be the 
Zariski closure of the subset of (\ref{eee}) given by
$\lambda y+\mu z=0$. It has an affine equation
$u^2=\mu^2 p(t)+\lambda^2 q(t)$,
where $u=\lambda x/z$. Since $\deg p(t)=\deg q(t)=4$,
the smooth curves in this family have genus 1.
Let $E$ be the class of $E_{(\lambda:\mu)}$ in $\Pic(\bar V)$.
Since $E_{(1:0)}$ and $E_{(0:1)}$ are disjoint, we have
$(E,E)=0$. Thus $V$ is an elliptic surface, with  a morphism 
$\ee:V\to\PP^1_\QQ$ such that the fibre above $(\lambda:\mu)$ is
$E_{(\lambda:\mu)}$. 

As was explained above, the condition $F(e_1,\ldots,e_8,c_1,c_2)\not=0$ 
guarantees that there are exactly six points $(r:s)\in\PP^1_\CC(\CC)$ 
such that $r p(t)+s q(t)$ is not separable. Moreover, for each of these
values of $(r:s)$ this polynomial has exactly one double root.
Since $p(t)$ and $q(t)$ are separable,
$\ee:V\to\PP^1_\QQ$ has exactly twelve singular geometric fibres, 
and each of them is a geometrically irreducible rational curve with one node.
In particular, the fibres of $\ee:V\to\PP^1_\QQ$ do not contain 
exceptional curves, that is,
smooth rational curves with self-intersection $-1$. In addition,  
the elliptic surface $V$ is geometrically rational so we have
$-K_V=E$ by \cite[Cor.~12.3, p.~214]{BHPV}. 
It follows that if $C\subset \bar V$ is an irreducible curve which is not
a fibre of $\ee$, then $(-K_V,C)>0$.

Let $L$ be the irreducible component of the
fibre of $\pi:V\to \PP^1_\QQ$ at $t=e_8$ which is contracted
to a point on $W$. Since $(L,E)=\frac{1}{2}(F,E)=1$, 
we see that $L$ is a section of $\ee:V\to\PP^1_\QQ$.
Let $\sigma: V\to W$ be the contraction of $L$.
It is easy to see that $-K_W=\sigma_*(E)$. Any two distinct
curves $\sigma(E_{(\lambda:\mu)})$ and $\sigma(E_{(\lambda':\mu')})$
have exactly one common point $\sigma(L)$, whence $(K_W,K_W)=1$. 
It follows that every irreducible curve in $\bar W$ has positive
intersection with $-K_W$. By the Nakai--Moishezon criterion
$-K_W$ is ample, and so $W$ is a del Pezzo surface of degree 1.
Theorem \ref{t:1} can be applied to the conic bundle 
$W\to\PP^1_\QQ$. This proves our first statement.

The components of the degenerate fibre at $e_i$, for $i=1,\,2,\,3,\,4$,
are defined over $\QQ(\sqrt{a_i})$, where $a_i=\prod_{j=5}^8(e_i-e_j)$.
The components of the degenerate fibre at $e_j$, for $j=5,\,6,\,7$,
are defined over $\QQ(\sqrt{a_j})$, where 
$a_j=\prod_{i=1}^4(e_j-e_i)/(e_8-e_i)$. Now the condition
in the last sentence of the proposition implies the relative
minimality of the conic bundle $W\to \PP^1_\QQ$, and hence,
by Proposition \ref{p}, 
the minimality of the del Pezzo surface $W$ of degree 1.
\end{proof}

\end{document}